\def\be#1{\begin{equation}\label{#1}}
\def\bas{\begin{align*}}
\def\eas{\end{align*}}
\def\bi{\begin{itemize}}
\def\ei{\end{itemize}}
\theoremstyle{plain}
   \newtheorem{theorem}[subsection]{Theorem}
   \newtheorem{proposition}[subsection]{Proposition}
   \newtheorem{lemma}[subsection]{Lemma}
\begin{document}

\author{Roberto Bramati}
\address{Universit\'e de Lorraine, CNRS, IECL, F-57000 Metz, France} 
\email{roberto.bramati@univ-lorraine.fr}

\author{Paolo Ciatti}
\address{Dipartimento di Ingegneria Civile, Edile e Ambientale, via Marzolo 9, 35131 Padova,
Italia}
\email{paolo.ciatti@unipd.it}

\author{John Green}
\address{Maxwell Institute of Mathematical Sciences and the School of Mathematics, University of
Edinburgh, JCMB, The King's Buildings, Peter Guthrie Tait Road, Edinburgh, EH9 3FD, Scotland}
\email{j.d.green@sms.ed.ac.uk}

\author{James Wright}
\address{Maxwell Institute of Mathematical Sciences and the School of Mathematics, University of
Edinburgh, JCMB, The King's Buildings, Peter Guthrie Tait Road, Edinburgh, EH9 3FD, Scotland}
\email{j.r.wright@ed.ac.uk}


\thanks{The first and the second author were partially supported by GNAMPA (Project 2020 “Alla frontiera tra l'analisi complessa in pi\`u variabili e l'analisi armonica”).}

\title[Oscillating multipliers on Heisenberg type groups]{Oscillating spectral multipliers on groups of Heisenberg type}

\begin{abstract}
We establish endpoint estimates for a class of oscillating spectral multipliers
on Lie groups of Heisenberg type. The analysis follows an earlier argument due to
the second and fourth author \cite{CW} but requires the detailed analysis of the wave equation
on these groups due to  M\"uller and Seeger \cite{MSeeger}. 

We highlight and develop the connection between sharp bounds for oscillating multipliers
and the problem of determining the minimal amount of smoothness required for
Mihlin-H\"ormander multipliers, a problem that was solved for groups of Heisenberg type but remains open for other groups.

\end{abstract}

\maketitle

\section{Introduction}
 
Let $G$ be a connected  Lie group and let $X_1, X_2, \ldots, X_n$ be left invariant vector fields
on $G$ which satisfy H\"ormander's condition; that is, they generate, together with the iterative commutators,
the tangent space of $G$ at every point (a special case is a Lie group of Heisenberg type; see
the next section for precise definitions). The sublaplacian 
${\mathcal L} = - \sum_{j=1}^n X_j^2$ is then a nonnegative, second order
hypoelliptic operator which is essentially
self-adjoint on $L^2(\mu)$ where $\mu$ is the right Haar measure on $G$. Hence $\sqrt{\mathcal L}$ admits a spectral
resolution $\{E_{\lambda}\}$; if $m$ is a bounded, Borel measurable function on $[0,\infty)$, then
the spectral theorem implies that
$$
m({\sqrt{\mathcal L}}) \ = \ \int_0^{\infty}m(\lambda) \, dE_{\lambda}
$$
is a bounded operator on $L^2(G)$. 

In this paper we will consider a general framework of spectral multipliers which contains oscillating examples of the form 
\begin{equation}\label{osc-multipliers}
m_{\theta, \beta}(\lambda)  \ = \ \frac{e^{i \lambda^{\theta}}}{\lambda^{\theta \beta/2}} \, \chi(\lambda)
\end{equation}
for any $\theta, \beta \ge 0$. Here $\chi \in C^{\infty}({\mathbb R})$ satisfies
$\chi(\lambda) \equiv 0$
for $\lambda\le 1$ and $\chi(\lambda) \equiv 1$ when $\lambda$ is large.
A consequence of our analysis is the following result.

\begin{theorem}\label{main-consequence}
Let $G$ be a Lie group of Heisenberg type and $1<p$. For all $\theta \ge 0$ with $\theta \not= 1$,  
$m_{\theta, \beta}(\sqrt{\mathcal L})$ is  bounded on $L^p(G)$ if and only if $\beta/2 \ \ge \ d \,  | 1/p - 1/2|$. Furthermore $m_{1,\beta}(\sqrt{\mathcal L})$ is bounded on $L^p(G)$ if and only if
$\beta/2 \ \ge \ (d-1) \, |1/p - 1/2|$. 
\end{theorem}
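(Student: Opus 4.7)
The plan is to prove sufficiency and necessity separately, with sufficiency carrying the weight. For sufficiency I follow the analytic interpolation scheme of \cite{CW}, embedding $m_{\theta,\beta}$ into the analytic family $m_{\theta,z}(\lambda) = e^{i\lambda^{\theta}} \lambda^{-z}\chi(\lambda)$ for $z\in\mathbb{C}$. At $\operatorname{Re} z = 0$ Plancherel gives $\|m_{\theta,z}(\sqrt{\mathcal L})\|_{L^2\to L^2}\lesssim 1$, with at most polynomial growth in $|\operatorname{Im} z|$. The task is to identify the correct right endpoint $\operatorname{Re} z = z_0$ at which an $H^1(G)\to L^1(G)$ bound (or a weak-type $(1,1)$ bound) holds; Stein interpolation then produces the desired $L^p$ estimate with the sharp relation $\beta/2 = z_0|1/p-1/2|$. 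The claim one must prove is that $z_0 = d$ when $\theta\neq 1$ and $z_0 = d-1$ when $\theta = 1$.

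To establish the endpoint I would decompose $m_{\theta,z}$ dyadically as $\sum_{k\ge 0} m^{(k)}_{\theta,z}$, with the $k$\textsuperscript{th} piece localised to the shell $\lambda\sim 2^k$, and write each piece via the Fourier integral
\[
e^{i\lambda^{\theta}}\chi_k(\lambda) \ = \ \int \widehat{\phi_k}(s) \, e^{is\lambda}\, ds,
\]
so that $m^{(k)}_{\theta,z}(\sqrt{\mathcal L})$ is realised as an average of wave propagators $e^{is\sqrt{\mathcal L}}$. This is exactly where the detailed Müller--Seeger analysis \cite{MSeeger} enters: it supplies finite-propagation and fine kernel estimates for $e^{is\sqrt{\mathcal L}}$ on Heisenberg-type groups, quantifying the concentration of the wave kernel on the set where the Carnot--Carath\'eodory distance is comparable to $|s|$. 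Pairing against $H^1$-atoms and summing the dyadic contributions yields the endpoint. When $\theta\neq 1$ a genuine non-degenerate stationary phase in $s$ gains a factor $\sim 2^{k(\theta-1)/2}$ per shell and the argument terminates at $z_0=d$, matching the sharp Mihlin--H\"ormander threshold of Hebisch / M\"uller--Stein on these groups (this is the link emphasised in the abstract). When $\theta=1$ the phase is linear and no such gain is available, but now the wave kernel lives on a $(d-1)$-dimensional sphere, and this single dimension reduction is what produces the improved exponent.

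For necessity I would test the operator against concentrated extremisers. The bound $\beta/2\ge d|1/p-1/2|$ when $\theta\neq 1$ is forced by a local (transplantation) argument: near the identity of $G$ the sublaplacian is, at top order, modelled by a constant coefficient operator on $\mathbb{R}^{d}$, so the Euclidean Hirschman--Miyachi--Peral sharpness result in dimension $d$ transfers. The improved threshold $\beta/2\ge (d-1)|1/p-1/2|$ for $\theta=1$ is sharp by testing on Knapp-type wave packets localised near a characteristic cone, following the sharpness half of \cite{MSeeger}.

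The main obstacle is the $H^1\to L^1$ endpoint for the full analytic family with \emph{controlled growth} in $|\operatorname{Im} z|$: Müller--Seeger's wave bounds are the right input, but combining them with the stationary phase expansion while tracking the $|\operatorname{Im} z|$-dependence (so that Stein interpolation can be applied at the critical line) requires delicate bookkeeping. In particular, handling the transitional dyadic scales where the support of $\widehat{\phi_k}$ meets the injectivity radius of the Carnot--Carath\'eodory metric, and separating the singular part of the wave kernel from the smooth tail uniformly in $z$, is where the argument of \cite{CW} in the Euclidean / compact setting must be genuinely upgraded using the group-specific machinery of \cite{MSeeger}.
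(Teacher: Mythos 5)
Your high-level scaffold (Plancherel at $\operatorname{Re}z=0$, $H^1\to L^1$ at the right endpoint, Stein interpolation, wave representation via Müller--Seeger) is in the same spirit as the paper's, but there are two genuine gaps.

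\textbf{The wrong Hardy space.} You propose to prove the endpoint as an $H^1(G)\to L^1(G)$ bound with the standard group Hardy space (Kor\'anyi balls, automorphic dilations). The paper explicitly explains that this does not work: the atom supports are Kor\'anyi balls of size $r^Q$, but the Euclidean kernel decay coming from the finite propagation speed estimate \eqref{finite-speed} is measured in $\|\cdot\|_E$, and these two geometries are incompatible. The heart of the paper's argument is the replacement of $H^1(G)$ by the \emph{local, isotropic} Hardy space $h^1_{iso}(G)$ of M\"uller--Seeger, defined via Euclidean balls skewed by the group translation, and the proof that $m_{large}(\sqrt{\mathcal L}):h^1_{iso}(G)\to L^1(G)$ (Theorem~\ref{hardy-iso}). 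Without this substitution your scheme stalls exactly where you flag ``transitional dyadic scales'' as delicate --- the Kor\'anyi cancellation gain is $\|\delta_{2^j}(y,v)\|_K=2^j\|(y,v)\|_K$, which is not controlled by $2^j\|(y,v)\|_E\leq 2^{j+L}$. Moreover, the resolution is not ``delicate bookkeeping'' with the coarse bounds \eqref{L1} and \eqref{finite-speed}: the paper shows those two inputs are insufficient for the small--radius atoms $j+L\le0$ and one must open up the M\"uller--Seeger subordination formula \eqref{subordination-formula}, split $\chi(\tau^{-1}\sqrt{\mathcal L})e^{i\sqrt{\mathcal L}}={\mathcal V}_\tau+{\mathcal W}_\tau+E_\tau$, and further decompose ${\mathcal W}_\tau=\sum_n{\mathcal W}_{\tau,n}$ in the central Fourier variable before the atom cancellation becomes usable. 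Your heuristic that a ``non-degenerate stationary phase in $s$ gains $2^{k(\theta-1)/2}$ per shell'' is not the mechanism here; the $d$ vs.\ $d-1$ threshold comes from combining the M\"uller--Seeger $L^1$ wave bound $|\tau|^{(d-1)/2}$ with the extra $1/2$ derivative one has available from Cauchy--Schwarz against $\|m^j\|_{L^2_s}$ with $s>d/2$, and from the dichotomy on the sign of $j(1-\theta)+L$, not from a phase gain.

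\textbf{Necessity.} Your proposed transplantation (``near the identity of $G$ the sublaplacian is, at top order, modelled by a constant coefficient operator on $\mathbb{R}^d$'') is not correct: $\mathcal L$ is a sum of only $d_1<d$ squares of vector fields and is hypoelliptic but not elliptic, so there is no constant coefficient elliptic model in dimension $d$ at the identity, and the Euclidean Hirschman--Miyachi--Peral sharpness does not transfer by freezing coefficients. The paper instead quotes \cite{GMM} (and \cite{gafa}), which establish the lower bound $d|1/p-1/2|$ for general stratified groups by a different, genuinely sub-Riemannian argument tied to ${\mathfrak s}_-(\mathcal L)\ge d/2$. Also note that the $\theta=1$ half of the statement is not re-proved in the paper at all --- it is simply the M\"uller--Seeger theorem plus \cite{GMM}.
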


The second part is a statement about the wave operator on groups of Heisenberg type and is not new. The sufficiency
is established by M\"uller and Seeger \cite{MSeeger} (see earlier work M\"uller and Stein \cite{MS1}) and the necessity
follows from recent work in \cite{GMM} which is valid on stratified groups of arbitrary step. 
The sufficiency of the first part is only new
at the endpoint $\beta/2 = d |1/p - 1/2|$.  The open range $\beta/2 > d |1/p - 1/2|$ follows from work on
a closely related problem concerning Mihlin-H\"ormander spectral multipliers.

The analysis in \cite{GMM} and \cite{gafa} shows
that for the  oscillating multipliers 
$m_{2,\beta}(\lambda) = e^{i \lambda^{2}}\lambda^{-\beta} \, \chi(\lambda)$,
the operator $m_{2,\beta}(\sqrt{\mathcal L})$ is unbounded on $L^p(G)$ when $\beta/2 < d |1/p - 1/2|$;
this being the case of the Schr\"odinger group and is valid for
sublaplacians ${\mathcal L}$ on any stratified Lie group $G$. 
This gives the necessity of the first
statement in Theorem \ref{main-consequence}.

Theorem \ref{main-consequence} is a consequence of a more general theorem. See Section \ref{framework}.

An interesting, ongoing problem is to determine the minimal amount
of smoothness for spectral multipliers $m$ which guarantees that
$m(\sqrt{\mathcal L})$ is bounded on all $L^p, 1<p<\infty$ (and weak-type $(1,1)$, bounded on $H^1$, etc...).
This problem has been studied extensively in the context of Mihlin-H\"ormander multipliers where a scale-invariant
Sobolev condition of the form
\begin{equation}\label{hormander}
\|m\|_{L^q_{s, sloc}} \ := \ \sup_{t\ge 0} \| m(t \cdot) \chi \|_{L^q_s({\mathbb R}_{+})} \ < \ \infty
\end{equation}
is imposed for an appropriate $q\in [1,\infty]$ and $s \in [0,\infty)$; here\footnote{We will use $\chi$ to denote smooth
cut-off functions in different situations (as in \eqref{osc-multipliers}). The context should be clear.}
$\chi \in C^{\infty}_c(0,\infty)$ is any nontrivial smooth cut-off function and $L^q_s$ is the $L^q$ Sobolev
space of order $s$. Following Martini and M\"uller \cite{gafa} (see also \cite{GMM}), we denote the Mihlin-H\"ormander
threshold ${\mathfrak s}({\mathcal L})$ as the infimum in $s \in [0,\infty)$ where 
\begin{equation}\label{zL}
\forall p \in (1,\infty), \ \exists \, C = C_{p,s} <\infty \ \ {\rm such \ that} \ \ \|m(\sqrt{\mathcal L})\|_{p\to p} \le C \|m\|_{L^2_{s, sloc}}
\end{equation}
holds for all bounded, Borel measurable $m$. 

When $G = {\mathbb R}^d$ and ${\mathcal L} = - \Delta$ is the usual
Laplacian, then it is well known that ${\mathfrak s}(-\Delta) = d/2$. A fundamental result of Christ \cite{C}
and Mauceri-Meda \cite{MM} shows that on any stratified Lie group $G$,  ${\mathfrak s}({\mathcal L}) \le Q/2$ where $Q$ is
the {\it homogeneous dimension} of $G$.  Shortly afterwards, Hebisch \cite{hebisch}
and  M\"uller and Stein \cite{MS}  observed that when $G$ is a group of Heisenberg type, then ${\mathfrak s}({\mathcal L}) \le d/2$
where $d$ is the topological dimension of $G$. \footnote{For any stratified Lie group $G$, the topological dimension
is always strictly smaller than the homogeneous dimension, except when $G$ is Euclidean.} A number of results
in this direction have been obtained since then, and we now know that $d/2 \le {\mathfrak s}({\mathcal L}) < Q/2$ for
any 2-step stratified group $G$ \cite{gafa} and that equality ${\mathfrak s}({\mathcal L}) = d/2$ holds in a number
of cases; see \cite{free}, \cite{plms} and \cite{alessio-reiter}.

Let $G$ be a Lie group of Heisenberg type (in particular a $2$-step stratified group) so that ${\mathfrak s}({\mathcal L}) = d/2$. 
Since $\|m\|_{L^2_{s,sloc}}$ is smaller than $\|m\|_{L^{\infty}_{s, sloc}}$ and 
$\|m\|_{L^{\infty}_{0,sloc}} \sim \, \|m\|_{L^{\infty}}$,
we can interpolate the estimates in \eqref{zL} with trivial
$L^2$ bounds to conclude that 
\begin{equation}\label{Linfty}
\|m(\sqrt{\mathcal L})\|_{p\to p} \ \le \ C_{p, s} \, \|m\|_{L^{\infty}_{s, sloc}}
\end{equation}
holds for all $p \in (1,\infty)$ and $s > d | 1/p - 1/2|$. In particular, since the oscillating multiplier
$m_{\theta, \beta}(\lambda)$ in \eqref{osc-multipliers} satisfies $\|m_{\theta, \beta}\|_{L^{\infty}_{\beta/2, sloc}} < \infty$,
we see that $m_{\theta, \beta}(\sqrt{\mathcal L})$ is bounded on $L^p(G)$ if $d |1/p - 1/2| < \beta/2$. Hence
it is the endpoint $L^p$ bound when $\beta/2 = d |1/p - 1/2|$ which interests us in this paper.

The estimates \eqref{Linfty} motivate the interest in another threshold exponent ${\mathfrak s}_{-}({\mathcal L})$, also 
introduced in \cite{gafa}, defined as the infimum in $s \in [0,\infty)$ where 
\begin{equation}\label{zL-}
\forall p \in (1,\infty), \ \exists \, C = C_{p,s} <\infty \ \ {\rm such \ that} \ \ \|m(\sqrt{\mathcal L})\|_{p\to p} \le C \|m\|_{L^{\infty}_{s, sloc}}
\end{equation}
holds for all bounded, Borel measurable $m$. By interpolation 
${\mathfrak s}_{-}({\mathcal L}) \le {\mathfrak s}({\mathcal L})$ and so the exponent 
${\mathfrak s}_{-}({\mathcal L})$ can be used
to provide lower bounds for ${\mathfrak s}({\mathcal L})$. 

In \cite{GMM}, the lower bound $d/2 \le {\mathfrak s}_{-}({\mathcal L})$ was established in great generality, including 
sublaplacians ${\mathcal L}$ on
any stratified Lie group of arbitrary step. In fact in \cite{GMM}, it was shown that if \eqref{Linfty} holds
for some $1\le p$ and $s\ge 0$ and all bounded Borel measurable $m$, then necessarily $s \ge  d |1/p - 1/2|$.
This implies that $d/2 \le {\mathfrak s}_{-}({\mathcal L}) \ (\le {\mathfrak s}({\mathcal L}))$ in this case. 
The same conclusions were obtained in \cite{gafa} in less generality; for sublaplacians on
any step 2 stratified group, with less robust methods. 


Yet another threshold exponent ${\mathfrak s}_{+}({\mathcal L})$ was introduced
in \cite{gafa} which will be useful for us. It is defined as the infimum in $s \in [0,\infty)$ where 
\begin{equation}\label{zL+}\, 
\forall \ {\rm compact}\,  K \subset {\mathbb R}, \ \exists  C_{K,s} <\infty \  {\rm such \ that} \ 
\|F(\sqrt{\mathcal L})\|_{1\to 1} \le C_{K,s} \|F\|_{L^{2}_{s}}
\end{equation}
holds for all Borel measurable $F$ with ${\rm supp}(F) \subset K$. The estimate \eqref{zL+} is the key estimate
behind establishing the endpoint bound $\|m(\sqrt{\mathcal L})\|_{L^1 \to L^{1,\infty}} \le C \|m\|_{L^2_{s, sloc}}$ via
Calder\'on-Zygmund theory. Hence ${\mathfrak s}({\mathcal L}) \le {\mathfrak s}_{+}({\mathcal L})$; see \cite{alessio-fourier}.  
In \cite{gafa}, the inequalities 
$d/2 \le {\mathfrak s}_{-}({\mathcal L}) \le {\mathfrak s}({\mathcal L}) \le {\mathfrak s}_{+}({\mathcal L}) < Q/2$
were established for any 2-step stratified Lie group although in many cases, including the case of groups
of Heisenberg type, we have 
$d/2 = {\mathfrak s}_{-}({\mathcal L}) = {\mathfrak s}({\mathcal L}) = {\mathfrak s}_{+}({\mathcal L})$. 

To see the relevance of \eqref{zL+}, consider the natural decomposition 
\begin{equation}\label{basic-decomp}
m(\lambda) \ = \ \sum_{j\in {\mathbb Z}} m(\lambda) \phi(2^{-j} \lambda) \ =: \ \sum_{j\in {\mathbb Z}} m_j(\lambda)
\end{equation}
of a general spectral multiplier $m$ where $\phi \in C^{\infty}_c (0, \infty)$ satisfies 
$\sum_{j \in {\mathbb Z}} \phi(2^{-j} \lambda) \equiv 1$.  Setting $m^j (\lambda) := m(2^j \lambda) \phi(\lambda)$,
we see that for our oscillating multipliers $m_{\theta, \beta}$, we have the uniform bound
\begin{equation}\label{L2-s-beta}
\|m^j_{\theta, \beta} \|_{L^2_s} \ \le \ C \, 2^{|j|\theta (s - \beta/2)}, \ \ \forall s \ge 0.
\end{equation}
Therefore if 
$\beta > 2 {\mathfrak s}_{+}({\mathcal L})$, we can find an $s$ with ${\mathfrak s}_{+}({\mathcal L}) < s < \beta/2$ and hence
\begin{align*}
\|m_{\theta, \beta}(\sqrt{\mathcal L})\|_{1 \to 1} &\le \sum_{j\in {\mathbb Z}} \|m^j_{\theta, \beta}(\sqrt{\mathcal L})\|_{1 \to 1}
\\&\le C_s \sum_{j\in {\mathbb Z}} \|m^j\|_{L^2_s} \le C_s \sum_{j\in {\mathbb Z}} 2^{-|j|\theta (\beta/2 - s)},
\end{align*}
showing that $m_{\theta, \beta}(\sqrt{\mathcal L})$ is bounded on $L^1(G)$ when $\beta > 2 {\mathfrak s}_{+}({\mathcal L})$. 
Now when $\beta < 2 s_{+}$ where $s_{+} = {\mathfrak s}_{+}({\mathcal L})$, we can embed $m_{\theta, \beta}$  
into the analytic family of multipliers
$m_z (\lambda) = \lambda^{\theta/2(\beta - (2 s_{+}+\delta) z)} m_{\theta, \beta}(\lambda)$  and use 
Stein's analytic  interpolation theorem
to conclude the following.

\begin{lemma}\label{open-range} When $\beta < 2 {\mathfrak s}_{+}({\mathcal L})$, then 
 $m_{\theta, \beta}(\sqrt{\mathcal L})$ is bounded on 
$L^p(G)$ whenever $2 {\mathfrak s}_{+}({\mathcal L}) |1/p - 1/2| < \beta/2$.
\end{lemma}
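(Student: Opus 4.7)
The plan is to apply Stein's analytic interpolation theorem to the analytic family of multipliers
\begin{equation*}
m_z(\lambda) \ = \ e^{i \lambda^{\theta}} \, \lambda^{-\theta (2 s_{+} + \delta) z/2} \, \chi(\lambda), \qquad 0 \le \mathrm{Re}(z) \le 1,
\end{equation*}
where $s_{+} = {\mathfrak s}_{+}({\mathcal L})$ and $\delta > 0$ is a small auxiliary parameter to be fixed at the end. This is the embedding already sketched in the discussion preceding the statement: in the notation of \eqref{osc-multipliers} one has $m_z = m_{\theta, (2s_{+} + \delta)z}$, and the hypothesis $\beta < 2 s_{+}$ guarantees that $z^{*} := \beta/(2s_{+} + \delta)$ belongs to $(0,1)$ for every $\delta > 0$, so that $m_{\theta, \beta}(\sqrt{\mathcal L})$ is recovered at $z = z^{*}$.

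Next I would establish operator norm bounds on the two vertical sides of the strip. On $\mathrm{Re}(z) = 0$ we have $|m_{it}(\lambda)| = \chi(\lambda) \le 1$, so the spectral theorem gives the uniform bound $\|m_{it}(\sqrt{\mathcal L})\|_{2 \to 2} \le 1$. On $\mathrm{Re}(z) = 1$ I would rerun the dyadic decomposition \eqref{basic-decomp} and the telescoping argument displayed just above the statement, now applied to $m_{1+it}$ whose effective decay exponent is $2 s_{+} + \delta$ in place of $\beta$. The only new feature is a polynomial factor $(1 + |t|)^{N}$ arising from differentiating the imaginary power $\lambda^{-it\theta(2s_{+} + \delta)/2}$, so the analogue of \eqref{L2-s-beta} becomes
\begin{equation*}
\|m^{j}_{1+it}\|_{L^2_s} \ \le \ C_s \, (1 + |t|)^{N} \, 2^{|j|\theta (s - (2s_{+} + \delta)/2)}.
\end{equation*}
Choosing any $s$ with $s_{+} < s < (2s_{+} + \delta)/2$, invoking the definition \eqref{zL+} of $s_{+}$ on each dyadic piece and summing in $j$ yields $\|m_{1+it}(\sqrt{\mathcal L})\|_{1 \to 1} \le C_{s,\delta}\,(1 + |t|)^{N}$. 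By duality the same bound holds on $L^{\infty}$.

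Since the resulting growth in $|\mathrm{Im}(z)|$ is only polynomial, Stein's interpolation theorem applies and produces the $L^p$ bound at $z = z^{*}$ precisely when $z^{*} = 2|1/p - 1/2|$, equivalently $\beta/2 = (2 s_{+} + \delta) |1/p - 1/2|$. Given any $p$ with $2 {\mathfrak s}_{+}({\mathcal L}) |1/p - 1/2| < \beta/2$, the choice $\delta = \beta/(2|1/p - 1/2|) - 2s_{+} > 0$ makes this equation hold, and the lemma follows. There is no genuine obstacle: the only delicate point is the bookkeeping of the $|t|$-dependence on $\mathrm{Re}(z) = 1$, and since that dependence is polynomial it is trivially admissible for Stein interpolation.
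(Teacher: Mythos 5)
Your proof is correct and follows essentially the same route the paper sketches in the paragraph immediately preceding the lemma: embed $m_{\theta,\beta}$ into the analytic family $m_z = m_{\theta,(2s_++\delta)z}$, use the trivial $L^2$ bound on $\mathrm{Re}(z)=0$ and the $\beta' > 2s_+$ $L^1$ bound on $\mathrm{Re}(z)=1$, and apply Stein's analytic interpolation; you have merely filled in the bookkeeping (the choice of $s\in(s_+,s_++\delta/2)$, the polynomial growth in $|t|$, and the choice of $\delta$ from $p$) that the paper leaves implicit.
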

In particular when ${\mathfrak s}_{+}({\mathcal L}) = d/2$, this gives us an alternative proof of the $L^p$ boundedness 
for $m_{\theta, \beta}(\sqrt{\mathcal L})$ in 
the open range $d |1/p - 1/2| < \beta/2$. 

A natural way to establish the endpoint $L^p$ bound
when $d |1/p - 1/2| = \beta/2$ is to embed the oscillating multiplier $m_{\theta, \beta}$ with $\beta < d$  into the analytic family
${\mathfrak m}_z(\lambda) = \lambda^{\theta/2 (\beta - d z)} m_{\theta, \beta}$ of multipliers so that
${\mathfrak m}_{\beta/d} = m_{\theta, \beta}$ and
prove some Hardy space estimate $H^1 \to L^1$  for ${\mathfrak m}_{1 + iy}$ 
with polynomial bounds in $|y|$. Since ${\mathfrak m}_{iy} \in L^{\infty}$, uniformly in $y$, the multiplier
operator is uniformly bounded on $L^2$ and so 
Stein's
analytic interpolation theorem can then be invoked to show ${\mathfrak m}_{\beta/d} = m_{\theta, \beta}$
is bounded on $L^p$ for $d |1/p - 1/2| = \beta/2$. This will be the procedure we will follow.

\subsection*{Notation}
We use the notation $A \lesssim B$ between two positive quantities $A$ and $B$ to denote $A \le CB$ for some
constant $C$. We sometimes use the notation $A \lesssim_k B$ to emphasize that the
implicit constant depends on the parameter $k$. We sometimes use $A = O(B)$ to denote the
inequality $A \lesssim B$.
Furthermore, we use $A \ll B$ to denote $A \le \delta B$ for a sufficiently small constant $\delta>0$
whose smallness will depend on the context.

\subsection*{Outline of paper}
In the next section we review the definition of Lie groups of Heisenberg type and recall
the key results from \cite{MSeeger} where M\"uller and Seeger give a detailed 
analysis of the wave equation in this setting, including the introduction of
a local, isotropic Hardy space $h^1_{iso}(G)$ which is compatible with the underlying group structure.
In Section \ref{framework}
we develop a general framework of spectral multipliers which include both Mihlin-H\"ormander multipliers
as well as the oscillating examples \eqref{osc-multipliers}.  We formulate the main estimate in Theorem \ref{main-consequence}
in this more general framework. In Section \ref{main-H1-estimate} we give the
proof of this main estimate on $h^1_{iso}(G)$, up to the final step which requires a fine decomposition  of
the wave operator in \cite{MSeeger}. We describe this decomposition  in Section \ref{decomposition}. In Section \ref{final-step},
we provide the final step in the the proof. 


\section{Groups of Heisenberg type: the work of M\"uller-Seeger \cite{MSeeger}}
Let $G$ be a connected Lie group and let ${\mathfrak g}$ denote the associated Lie algebra. 
We say that $G$ is a Lie group of Heisenberg type if its Lie algebra ${\mathfrak g}$ is a Lie algebra of Heisenberg type. A Lie algebra of Heisenberg type is a 2-step stratified Lie algebra, i.e., ${\mathfrak g}= {\mathfrak g}_1 \oplus {\mathfrak g}_2$, where ${\mathfrak g}_1$, ${\mathfrak g}_2$ are subspaces of dimensions $d_1,d_2$ satisfying $[ {\mathfrak g}, {\mathfrak g}] \subseteq {\mathfrak g}_2\subseteq {\mathfrak z}({\mathfrak g})$, where $ {\mathfrak z}({\mathfrak g})$ is the centre of ${\mathfrak g}$. Moreover the definition requires that, endowing $\mathfrak g$ with an inner product $\langle \cdot, \cdot \rangle$ such that ${\mathfrak g}_1$ and 
${\mathfrak g}_2$ are orthogonal subspaces, the unique skew-symmetric endomorphisms $J_{\mu}$ on ${\mathfrak g}_1$, with $\mu \in {\mathfrak g}_2^{*}\setminus\{0\}$, defined by 
$$ 
 \langle J_{\mu} (V), W \rangle \ = \ \mu([V,W])  \ \ {\rm for \ all \ pairs} \ \ V, W \in {\mathfrak g}_1
$$
satisfy $J_{\mu}^2 = - |\mu|^2 I$. In particular this implies that ${\rm dim}\, {\mathfrak g}_1  = d_1$ is even. 
 
We fix an orthonormal basis $X_1, \ldots, X_{d_1}$ of ${\mathfrak g}_1$ 
and an orthonormal basis $U_1, \ldots, U_{d_2}$ of ${\mathfrak g}_2$. We identify the dual spaces
${\mathfrak g}_1^{*}$ and ${\mathfrak g}_2^{*}$ with ${\mathfrak g}_1$ and ${\mathfrak g}_2$
via the inner product.

From now on, $G$ will denote a Lie group of Heisenberg type (and ${\mathfrak g}$ will denote
a Lie algebra of Heisenberg type).

The operators ${\mathcal L}, -i U_1, \ldots, -i U_{d_2}$ form a set of positive strongly commuting self-adjoint
operators and admit a joint spectral resolution. However we will only need to consider operators of the form
$\phi({\mathcal L}, |U|)$ where $U := (-i U_1, \ldots, -i U_{d_2})$. 

We will identify $G$ with its Lie algebra ${\mathfrak g} = {\mathfrak g}_1 \oplus {\mathfrak g}_2 \simeq 
{\mathbb R}^{d_1} \times {\mathbb R}^{d_2}$ via the exponential map
and we write points in $G$ as $(x,u)$ where $x \in {\mathfrak g}_1 \simeq {\mathbb R}^{d_1}$
and $u \in {\mathfrak g}_2 \simeq {\mathbb R}^{d_2}$. The topological dimension of $G$
is $d = d_1 + d_2$ and the homogeneous dimension is $Q = d_1 + 2 d_2$. We can write the 
group law on $G$ as
$$
(x,u) \cdot (x', u') \ = \ (x + x', u + u' + \frac{1}{2} \langle J x, x' \rangle )
$$
where $x\in {\mathfrak g}_1$ and $u \in {\mathfrak g}_2$ and $\langle J x, x' \rangle $ denotes a vector
in ${\mathfrak g}_2$ with components $\langle J_{U_i} x, x' \rangle$.

Consider the positive sublaplacian
$$
{\mathcal L} \ = \ - (X_1^2 \ + \ \cdots \ + \ X_{d_1}^2)
$$  
with spectral resolution $\sqrt{\mathcal L} = \int_0^{\infty} \lambda \,  dE_{\lambda}$. 
Then 
\begin{equation}\label{spectral-resolution}
m(\sqrt{\mathcal L}) \ = \ \int_0^{\infty} m(\lambda) \, d E_{\lambda}
\end{equation}
defines a spectral multiplier operator
which is  bounded on $L^2(G)$ precisely when
$m$ is a bounded, Borel measurable function on ${\mathbb R}_{+} = [0,\infty)$. 

Abusing notation, we will also denote by $m(\sqrt{{\mathcal L}})$
the convolution kernel of the operator $m(\sqrt{{\mathcal L}})$. 

The main result of M\"uller and Seeger in \cite{MSeeger} states that the wave operator
$m_{1, \beta}(\sqrt{\mathcal L}) = e^{i \sqrt{\mathcal L}} (1 + \sqrt{\mathcal L})^{-\beta/2}$ on a group $G$
of Heisenberg type is bounded on $L^p(G)$ when $\beta/2 = (d-1) |1/p - 1/2|$. Recall that we
included this in the statement of Theorem \ref{main-consequence}. 

Hence solutions
$$
u(\cdot, \tau) \ = \ \cos(\tau \sqrt{\mathcal L}) \, f \ + \ \frac{\sin(\tau \sqrt{\mathcal L})}{\sqrt{\mathcal L}} \, g
$$
of the Cauchy problem
$$
(\partial^2_{\tau} + {\mathcal  L}) u \ = \ 0, \ \ u |_{\tau =0} = f, \ \ \partial_{\tau} u |_{\tau = 0} = g 
$$
satisfy the Sobolev inequality 
\begin{equation}\label{sobolev}
\|u(\cdot, \tau)\|_p \ \le \ C \, \bigl[ \|(1 + \tau^2 {\mathcal  L})^{\gamma/2} f \|_p \ 
+ \ \| \tau (1 + \tau^2 {\mathcal L})^{\gamma/2 - 1} g \|_p \bigr]
\end{equation}
where $\gamma = (d-1) |1/p - 1/2|$. 

Their proof involves a detailed analysis of the singularities of the wave kernel on groups of Heisenberg type
and an appropriate corresponding Littlewood-Paley type decomposition of the wave operator. Their analysis  
also gives sharp $L^1$ estimates for wave operators whose symbols are supported in
a dyadic interval. 

\begin{theorem}\label{MS-basic} \cite{MSeeger} Let $\chi \in C^{\infty}_c$ be supported in $(1/2, 2)$.
Then the $L^1$ operator norm of $\chi(\lambda^{-1} \sqrt{\mathcal L}) e^{i \sqrt{\mathcal L}}$ has the following bound:
\begin{equation}\label{L1}
\|\chi(\lambda^{-1} \sqrt{\mathcal L}) e^{i \sqrt{\mathcal L}}\|_{L^1(G) \to L^1(G)} \ \le \  C  \,  (1+|\lambda|)^{(d-1)/2}.
\end{equation}
\end{theorem}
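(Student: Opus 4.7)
My first move would be a rescaling to normalize the oscillation. Because $\sqrt{\mathcal L}$ is homogeneous of degree one under the parabolic dilations $\delta_r:(x,u)\mapsto (rx, r^2 u)$ of $G$, the conjugation identity $m(\sqrt{\mathcal L})(f\circ\delta_r) = (m(r\sqrt{\mathcal L})f)\circ\delta_r$ combined with the $L^1$-invariance of pull-back by $\delta_r$ (up to the $r^Q$ Jacobian) gives
$$
\|\chi(\lambda^{-1}\sqrt{\mathcal L})\, e^{i\sqrt{\mathcal L}}\|_{L^1\to L^1} \ = \ \|\chi(\sqrt{\mathcal L})\, e^{i\lambda\sqrt{\mathcal L}}\|_{L^1\to L^1},
$$
so the theorem reduces to estimating the $L^1\to L^1$ norm of the unit-frequency wave operator at time $\lambda$ by $(1+\lambda)^{(d-1)/2}$. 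For bounded $\lambda$ the convolution kernel is a Schwartz-class function on $G$ with uniformly bounded $L^1$ norm, so I may assume $\lambda\gg 1$.

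Next, I would compute the convolution kernel $K_\lambda$ of $\chi(\sqrt{\mathcal L})\, e^{i\lambda\sqrt{\mathcal L}}$ explicitly via the joint spectral resolution of $\mathcal L$ and the central vector $U$ recalled in the paper. Taking partial Fourier transform in $u\in\mathfrak g_2$, the restriction of $\mathcal L$ to the fiber at $\mu\in\mathfrak g_2^*\setminus\{0\}$ is a twisted Euclidean Laplacian on $\mathfrak g_1\simeq \mathbb R^{d_1}$ with discrete spectrum $\{(2k+d_1/2)|\mu|\}_{k\in\mathbb N}$ and Laguerre-type spectral projections $\Pi_{k,\mu}(x)$. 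This yields the representation
$$
K_\lambda(x,u) \ = \ c \int_{\mathfrak g_2^*} e^{-iu\cdot\mu} \sum_{k\geq 0} \chi\!\left(\sqrt{(2k+d_1/2)|\mu|}\right) e^{i\lambda\sqrt{(2k+d_1/2)|\mu|}}\, \Pi_{k,\mu}(x)\, d\mu,
$$
in which only finitely many values of $k$ contribute because of the cutoff $\chi$.

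The third step is a stationary phase analysis in $(k,\mu)$. Changing variables to $\tau=\sqrt{(2k+d_1/2)|\mu|}\in (1/2,2)$ and an angular variable on $\mathfrak g_2^*/\mathbb R_+$, and inserting the Mehler/Laguerre large-argument asymptotics for $\Pi_{k,\mu}(x)$, the total phase becomes the classical action of the sub-Riemannian Hamiltonian flow at time $\lambda$. Its stationary set is a hypersurface of topological dimension $d-1$ lying in a ball of radius $\sim \lambda$, and transverse nondegenerate stationary phase then yields an amplitude decay $\sim \lambda^{-(d-1)/2}$ on each dyadic scale. A decomposition of $K_\lambda$ into dyadic shells about this hypersurface, followed by summation, produces the claimed $L^1$ bound $\lesssim \lambda^{(d-1)/2}$. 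The degenerate regimes $|\mu|\to 0$ (the Euclidean wave limit) and $|x|^2|\mu|\lesssim 1$ (where the Laguerre asymptotics fail) occupy a small part of phase space and can be handled separately using the exact Laguerre kernel together with crude size bounds.

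The main obstacle is obtaining the topological-dimension exponent $(d-1)/2$ rather than the weaker homogeneous-dimension exponent $(Q-1)/2$ one would obtain from a naive oscillatory integral estimate on $G$ regarded as a sub-Riemannian manifold. The saving of $d_2$ powers of $\lambda$ comes from the oscillation of the Laguerre projections along the center direction $\mathfrak g_2$, which in effect trades $d_2$ Fourier variables against the $d_2$ central coordinates inside the stationary phase. Carrying out this cancellation with the required precision, especially at the boundary between the classical and focal Laguerre regimes and in the transition to the Euclidean limit $|\mu|\to 0$, is exactly what the M\"uller--Seeger decomposition encodes, and I would rely on that decomposition (as the authors do in Section \ref{decomposition}) to close the argument.
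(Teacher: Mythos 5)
The theorem is attributed directly to M\"uller--Seeger and the paper does not reprove it; what the paper does is record, in Section~\ref{decomposition}, how it follows from the subordination formula (Lemma~\ref{subordination}), which turns the wave propagator into a superposition of Schr\"odinger propagators, plus the decomposition of the Mehler-type kernel near the singular times $s|\mu|/\tau \in \pi\mathbb{Z}$, with the refined $L^1$ bounds of Proposition~\ref{n0-1}. Your proposal takes a genuinely different conceptual route: rather than subordinating to the Schr\"odinger group, you propose to expand directly in the joint Laguerre/Hermite spectral decomposition of $(\mathcal L,|U|)$ and run stationary phase in $(k,\mu)$. That is closer in spirit to the older M\"uller--Stein argument on the Heisenberg group than to the decomposition the present paper records. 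Your opening normalization is correct and worth stating: since $\sqrt{\mathcal L}$ is homogeneous of degree $1$ under the automorphic dilations $\delta_r$ and pull-back by $\delta_r$ scales $L^1$ norms by $r^{-Q}$, the $L^1\to L^1$ operator norms of $\chi(\lambda^{-1}\sqrt{\mathcal L})e^{i\sqrt{\mathcal L}}$ and $\chi(\sqrt{\mathcal L})e^{i\lambda\sqrt{\mathcal L}}$ coincide, and the $\lambda\lesssim 1$ case reduces to Hulanicki's theorem.

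There is, however, a real gap in the stationary phase step. You assert that the critical set is a hypersurface of topological dimension $d-1$ and invoke ``transverse nondegenerate stationary phase'' to get $\lambda^{-(d-1)/2}$ decay per dyadic shell. But the entire difficulty here is that the sub-Riemannian wave front has focal and conjugate points where the Hessian of the phase degenerates; the exponent $(d-1)/2$ rather than $(Q-1)/2$ is \emph{not} automatic from a clean nondegenerate stationary phase, and at the degenerate strata one only gets it after substantial additional cancellation. You gesture at this (``boundary between the classical and focal Laguerre regimes''), which is honest, but the proposal does not actually handle it. Moreover, the closing sentence --- ``rely on the M\"uller--Seeger decomposition to close the argument'' --- does not cohere with the setup: the M\"uller--Seeger decomposition is organized around the Schr\"odinger subordination and the periodic singularities of the $\cot(2\pi s|\mu|/\tau)$ phase, not around the Laguerre expansion you introduced, so you would be switching representations mid-argument rather than completing the one you started. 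As a sketch of a legitimate alternative route it is sensible, but as a proof it leaves the crux unaddressed.
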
 

Such $L^1$ estimates immediately show that ${\mathfrak s}_{+}({\mathcal L}) \le d/2$ on groups of Heisenberg type.

\begin{proposition}\label{L1-key} For all $F$, compactly supported in ${\mathbb R}^{+}$
and for all $s > d/2$, we can find a constant $C = C_s$ such that
\begin{equation}\label{key-estimate}
\int_G |F(\sqrt{\mathcal L})(x)| \, dx \ \le \ C_s \, \|F\|_{L^{2}_s}
\end{equation}
holds. Here we are employing our convention that $F(\sqrt{\mathcal L})(x)$ also denotes the
convolution kernel of the operator  $F(\sqrt{\mathcal L})$.
\end{proposition}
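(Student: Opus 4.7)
The plan is to follow the classical Fourier-analytic scheme of Christ \cite{C} and Mauceri--Meda \cite{MM}, substituting the sharp wave operator estimate of Theorem \ref{MS-basic} for the standard fixed-time bound used in those works. This substitution replaces the homogeneous dimension $Q$ by the topological dimension $d$, which is precisely what produces the sharp threshold $s > d/2$ on groups of Heisenberg type.

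Suppose $\mathrm{supp}\, F \subset K \subset [0,R]$ and decompose $F = F_{\mathrm{low}} + \sum_{k=0}^{k_R} F_k$, where $F_{\mathrm{low}}$ is supported in $[0,2]$, each $F_k(\lambda) = F(\lambda)\phi(2^{-k}\lambda)$ is a dyadic piece supported in $[2^{k-1}, 2^{k+1}]$, and $k_R \sim \log_2 R$. The stratified dilations $D_r$ intertwine $\sqrt{\mathcal L}$ with $r\sqrt{\mathcal L}$ and preserve the $L^1$ norms of convolution kernels, so setting $\tilde F_k(\lambda) := F_k(2^k \lambda)$ --- supported in the fixed annulus $[1/2,2]$ --- yields
\begin{equation*}
\|F_k(\sqrt{\mathcal L})\|_{L^1(G)} \ = \ \|\tilde F_k(\sqrt{\mathcal L})\|_{L^1(G)}.
\end{equation*}
Pick $\tilde\chi \in C^\infty_c((1/4, 4))$ with $\tilde\chi \equiv 1$ on $[1/2, 2]$, so that Fourier inversion gives
\begin{equation*}
\tilde F_k(\sqrt{\mathcal L}) \ = \ \frac{1}{2\pi}\int_{\mathbb R}\widehat{\tilde F_k}(\tau)\, e^{i\tau\sqrt{\mathcal L}}\,\tilde\chi(\sqrt{\mathcal L})\,d\tau.
\end{equation*}
A further dilation by $|\tau|^{-1}$, together with a finite subdivision of $\tilde\chi$ into pieces supported in $(1/2,2)$, converts Theorem \ref{MS-basic} into the bound $\|e^{i\tau\sqrt{\mathcal L}}\,\tilde\chi(\sqrt{\mathcal L})\|_{L^1 \to L^1} \lesssim (1+|\tau|)^{(d-1)/2}$. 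Taking this estimate inside the integral and applying Cauchy--Schwarz then yields
\begin{equation*}
\|\tilde F_k(\sqrt{\mathcal L})\|_{L^1(G)} \ \lesssim \ \|\tilde F_k\|_{L^2_s}\Big(\int_{\mathbb R}(1+|\tau|)^{d-1-2s}\,d\tau\Big)^{1/2},
\end{equation*}
where the last integral converges precisely when $s > d/2$ --- the critical threshold. The low-frequency piece $F_{\mathrm{low}}$ is treated by the same argument applied directly, without the dilation step, using a cutoff supported in $[0,4]$.

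To finish, one sums the per-piece bounds: the index set is finite of cardinality $\lesssim_K 1$, each rescaled norm $\|\tilde F_k\|_{L^2_s}$ is controlled by the local Sobolev norm of $F$ at scale $2^k$, and a Littlewood--Paley near-orthogonality estimate in $k$ combined with Cauchy--Schwarz delivers $\int_G |F(\sqrt{\mathcal L})(x)|\, dx \le C_{K,s}\|F\|_{L^2_s}$ as required. The main technical obstacle is precisely this final step: the scaling argument naturally produces rescaled Sobolev norms of the individual dyadic pieces, and one must exploit the compactness of $\mathrm{supp}\, F$ carefully to collapse these into a clean global bound in terms of $\|F\|_{L^2_s}$.
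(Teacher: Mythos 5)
Your proposal is correct and converges to the same essential mechanism as the paper — Fourier inversion to write $F(\sqrt{\mathcal L})$ as an integral of wave propagators, the dilation step to reduce to Theorem \ref{MS-basic}, and Cauchy--Schwarz to land in $L^2_s$ with the critical threshold $s>d/2$ — but you take a noticeably more circuitous route than the paper does. The paper does not perform any dyadic decomposition at all. Since $F$ is \emph{a priori} compactly supported in $\mathbb{R}^+$ and the constant is permitted to depend on that support, the paper simply picks a single cutoff $\chi \in C^\infty_c(\mathbb{R})$ with $\chi F = F$, writes
$$
F(\sqrt{\mathcal L}) \ = \ \int \widehat F(\tau) \, \chi(\sqrt{\mathcal L}) \, e^{i\tau\sqrt{\mathcal L}} \, d\tau,
$$
invokes the bound $\|\chi(\sqrt{\mathcal L}) e^{i\tau\sqrt{\mathcal L}}\|_{L^1\to L^1} = O((1+|\tau|)^{(d-1)/2})$ (which follows from Theorem \ref{MS-basic} after the same dilation-by-$|\tau|$ and finite-partition argument you describe), and closes with a single application of Cauchy--Schwarz. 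There is no summation over dyadic pieces to worry about. The Littlewood--Paley decomposition, the rescaled norms $\|\tilde F_k\|_{L^2_s}$, and the near-orthogonality summation step that you flag as ``the main technical obstacle'' are all artifacts of your extra decomposition; they would be genuinely needed to prove the scale-invariant estimate \eqref{L1-consequence} directly (where the sup over dilates in \eqref{hormander} forces one to handle all scales uniformly), but Proposition \ref{L1-key} is a fixed-compact-support statement and does not require it. Your approach works, but you have recreated a harder problem than the one being asked.
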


\begin{proof} By the Fourier inversion formula, we write
$$
F(t) \ = \ \int {\widehat F}(\tau) e^{i t \tau} \, d\tau.
$$
Let $\chi \in C^{\infty}_c({\mathbb R})$ such that $F(t) = \chi(t) F(t)$ and hence 
\begin{equation}\label{fourier-rep}
F(\sqrt{\mathcal L}) \ = \ \int {\widehat F}(\tau) \chi(\sqrt{\mathcal L}) e^{i \tau \sqrt{\mathcal L}} \, d\tau.
\end{equation}
By Theorem \ref{MS-basic}, we see that the operator $\chi(\sqrt{\mathcal L}) e^{i\tau \sqrt{\mathcal L}}$
is bounded on $L^1(G)$ with operator norm $O((1+|\tau|)^{(d-1)/2})$. Hence by Cauchy-Schwarz,
$$
\|F(\sqrt{\mathcal L})\|_{L^1(G)} \ \le \  \int |{\widehat F}(\tau)| (1+|\tau|)^{(d-1)/2} \, d\tau \ \lesssim_s \
\|F\|_{L^2_s}
$$
for any $s > d/2$, establishing \eqref{key-estimate}.
\end{proof}

As we already mentioned, the estimate \eqref{key-estimate} implies (via Calder\'on-Zygmund techniques;
see \cite{alessio-fourier}) that
\begin{equation}\label{L1-consequence}
\|m(\sqrt{\mathcal L}) \|_{L^1 \to L^{1,\infty}} \ \ {\rm and} \ \  \|m(\sqrt{\mathcal L}) \|_{L^p \to L^p} \ \le \ C \,
\|m\|_{L^2_{s, sloc}}
\end{equation}
for any $s>d/2$ and all $1<p <\infty$.

In \cite{MSeeger}, M\"uller and Seeger give an alternative proof of \eqref{L1-consequence} which directly uses
the $L^1$ operator norm bound of $\chi(\lambda^{-1} \sqrt{\mathcal L}) e^{i \sqrt{\mathcal L}}$ in
\eqref{L1}, together with a pointwise bound on the convolution kernel of $\chi(\lambda^{-1} \sqrt{\mathcal L}) e^{i \sqrt{\mathcal L}}$ 
which is easily derived from 
the fundamental finite propagation speed property of solutions of the wave
equation in this context (see \cite{melrose}).

 In fact the real part of the convolution kernel of
$\chi(\lambda^{-1} \sqrt{\mathcal L}) e^{ i \sqrt{\mathcal L}}$ can be written as $\varphi_{\lambda} * {\mathcal P}$
where $\varphi_{\lambda}$ and 
${\mathcal P}$ are the convolution kernels of $\chi(\lambda^{-1} \sqrt{\mathcal L})$
and $\cos(\sqrt{\mathcal L})$, respectively. Here $\varphi_{\lambda}(x,u) = \lambda^Q \varphi(\delta_{\lambda}(x,u))$ is a dilate of
a Schwartz function $\varphi$ and
the finite speed property implies that ${\mathcal P}$ is a compactly
supported distribution of finite order. Hence there exists an $M\ge 1$ such that the bound
$|\varphi_{\lambda} * P (x,u)| \lesssim_N \,  \lambda^M \|\delta_{\lambda}(x,u)\|_E^{-N}$
holds for large $\lambda$ and any $N\ge 1$  (here $\| \cdot\|_E$ denotes the Euclidean norm on $G$). A similar bound
holds for the imaginary part (see \cite{MSeeger},  Proposition 8.8 for further details). 

We record this
bound in the following proposition.

\begin{proposition} For $\lambda \ge 1$ and $\chi \in {\mathcal S}({\mathbb R})$, the convolution kernel of
the operator $\chi(\lambda^{-1} \sqrt{\mathcal L}) e^{i \sqrt{\mathcal L}}$ has the following bound; there exists an $M\ge 1$ such that
\begin{equation}\label{finite-speed}
\bigl| \chi(\lambda^{-1} \sqrt{\mathcal L}) e^{i \sqrt{\mathcal L}} (x,u) \bigr| \ \le \ C_N  \lambda^M (\lambda |x| + \lambda^2 |u|)^{-N}
\end{equation}
holds for any $N\ge 1$. Here $C_N$ depends only on $N$ and a suitable Schwartz norm of $\chi$.
\end{proposition}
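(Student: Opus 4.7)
The plan is to follow the sketch outlined in the paragraph preceding the statement. Split $e^{i\sqrt{\mathcal{L}}} = \cos\sqrt{\mathcal{L}} + i\sin\sqrt{\mathcal{L}}$; for the real part, spectral calculus gives that the kernel of $\chi(\lambda^{-1}\sqrt{\mathcal{L}})\cos\sqrt{\mathcal{L}}$ equals the group convolution $\varphi_\lambda * \mathcal{P}$, where $\varphi_\lambda(x,u) = \lambda^Q \varphi(\delta_\lambda(x,u))$ for a Schwartz function $\varphi$ depending on $\chi$ and $\mathcal{P}$ is the convolution kernel of $\cos\sqrt{\mathcal{L}}$. By Melrose's finite propagation speed theorem for sub-Riemannian wave equations, $\mathcal{P}$ is a distribution of finite order $k$ compactly supported in a Euclidean ball $B_R \subset G$ of bounded radius. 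For the imaginary part, the kernel of $\sin\sqrt{\mathcal{L}}$ is not itself compactly supported, but I would factor $\sin\sqrt{\mathcal{L}} = \sqrt{\mathcal{L}}\cdot(\sin\sqrt{\mathcal{L}}/\sqrt{\mathcal{L}})$ and absorb $\sqrt{\mathcal{L}}$ into the Schwartz symbol via $\chi(\lambda^{-1}\sqrt{\mathcal{L}})\sqrt{\mathcal{L}} = \lambda\,\widetilde\chi(\lambda^{-1}\sqrt{\mathcal{L}})$ with $\widetilde\chi(s) := s\chi(s) \in \mathcal{S}(\mathbb{R})$; since $\sin\sqrt{\mathcal{L}}/\sqrt{\mathcal{L}}$ is again compactly supported by finite propagation speed, the real-part analysis applies verbatim and contributes only an extra factor of $\lambda$ absorbable into $M$.

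The core estimate is then to bound the pairing
\[ (\varphi_\lambda * \mathcal{P})(x,u) = \langle \mathcal{P}, \Phi_{x,u}\rangle, \qquad \Phi_{x,u}(y,v) := \varphi_\lambda\bigl((x,u)(y,v)^{-1}\bigr), \]
by a constant times $\|\Phi_{x,u}\|_{C^k(B_R)}$. Using the group law $(x,u)(y,v)^{-1} = (x-y,\, u-v-\tfrac12\langle Jx,y\rangle)$, each $y$-derivative produces a contribution of size $\lambda$ plus one of size $\lambda^2|x|$ coming from the central cross term, and each $v$-derivative contributes $\lambda^2$; this leads to a crude bound of the form $\lambda^{Q+2k}(1+|x|)^k$ times Schwartz decay of $\varphi$ evaluated at $\delta_\lambda$ of the argument $(x-y, u-v-\tfrac12\langle Jx,y\rangle)$. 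To convert this Schwartz decay into decay in $\lambda|x|+\lambda^2|u|$, I would split cases: if $|x|\gtrsim 1$, then $\lambda|x-y|\gtrsim \lambda|x|$ uniformly for $(y,v)\in B_R$; if $|u|\gtrsim 1+|x|$, then $|u-v-\tfrac12\langle Jx,y\rangle|\gtrsim |u|$. Combining these with $(1+|x|)^k \le C(1+\lambda|x|+\lambda^2|u|)^k$ (for $\lambda \ge 1$) and Schwartz decay of sufficiently high order yields the asserted inequality, with any polynomial-in-$\lambda$ losses absorbed into $M$. In the bounded region $\lambda|x|+\lambda^2|u|\lesssim 1$, the claim reduces to a uniform $L^\infty$ bound on the kernel, which follows from $\|\varphi_\lambda\|_\infty \lesssim \lambda^Q$ together with the finite order of the compactly supported distribution $\mathcal{P}$.

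The main obstacle is the critical intermediate regime in which $|x|$ and $|u|$ are of comparable size, where the cross term $\tfrac12\langle Jx,y\rangle$ (which has magnitude up to $\tfrac12 R|x|$ for $(y,v)\in B_R$) can largely cancel $u-v$, so that Schwartz decay of $\varphi$ in the central direction is effectively unavailable. In that subcase one must fall back on decay in the horizontal variable alone and compare $\lambda|x|$ with the full quantity $\lambda|x|+\lambda^2|u|$, picking up a polynomial-in-$\lambda$ factor to be absorbed into $M$. The careful bookkeeping of this case, including the precise estimates for all higher-order derivatives entering the $C^k$ bound, is the content of Proposition~8.8 of \cite{MSeeger}.
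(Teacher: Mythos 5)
Your proposal follows the same route the paper takes in the paragraph immediately preceding the proposition: write the kernel as a group convolution $\varphi_\lambda * \mathcal{P}$ with $\varphi_\lambda$ a $\delta_\lambda$-dilated Schwartz function (Hulanicki) and $\mathcal{P}$ a compactly supported finite-order distribution (Melrose's finite propagation speed), pair against the $C^k$ norm on the support, and defer the delicate bookkeeping to \cite{MSeeger}, Proposition 8.8 — exactly as the paper does. Your factoring $\chi(\lambda^{-1}\sqrt{\mathcal L})\sin\sqrt{\mathcal L} = \lambda\,\widetilde\chi(\lambda^{-1}\sqrt{\mathcal L})\cdot\sin\sqrt{\mathcal L}/\sqrt{\mathcal L}$ to handle the imaginary part is a correct way to supply the detail the paper waves at with ``a similar bound holds for the imaginary part,'' and you correctly identify that the intermediate regime (both $|x|,|u|$ bounded but $\lambda|x|+\lambda^2|u|$ large, where the complement of your first two cases actually lives and the cross term $\langle Jx,y\rangle$ can cancel $u-v$) is precisely what \cite{MSeeger} Prop.~8.8 handles.
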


M\"uller and Seeger
use \eqref{L1} and \eqref{finite-speed} to give a short
proof of \eqref{L1-consequence}. Their argument also
shows that
\begin{equation}\label{MS-H1}
\|m(\sqrt{\mathcal L}) \|_{H^1(G) \to L^1(G)} \  \ \le \ C \,
\|m\|_{L^2_{s, sloc}}
\end{equation}
for any $s>d/2$. Here $H^1(G)$ denotes the Hardy space on $G$ defined with respect to the 
nonisotropic automorphic dilations
\begin{equation}\label{auto-dilations}
\delta_r (x, u) \ := \ (rx, r^2 u), \ \ \ r >0,
\end{equation} 
together with the Kor\'anyi balls
\begin{equation}\label{Koranyi}
B_r (x,u) \ := \ \bigl\{ (y,v) \in G:  \ \| (y,v)^{-1} \cdot (x,u) \|_K < r \bigr\}
\end{equation}
where $\| (x,u)\|_K := (|x|^4 + |4u|^2 )^{1/4}$ defines the Kor\'anyi norm on $G$. 

As outlined at the end of the Introduction, to prove the endpoint $L^p$ bound for the
oscillating spectral multipliers $m_{\theta, \beta}$ it suffices to embed $m_{\theta, \beta}$
into an analytic family ${\mathfrak m}_z$ and establish an appropriate
Hardy space bound $H^1 \to L^1$ for ${\mathfrak m}_{1+iy}$ corresponding to $m_{\theta, d}$. It is natural to try to do
this with the Hardy space $H^1(G)$ described above, defined with the nonisotropic automorphic
dilations \eqref{auto-dilations}. However we are unable to do this; the geometry of the Kor\'anyi balls
\eqref{Koranyi} does not seem appropriate for our problem. Instead we need a Hardy space that is
defined using isotropic dilations $r (x,u) := (r x, r u)$ but which is also compatible with the
Heisenberg group structure. For similar reasons, this was also the case for M\"uller and Seeger
in their proof of \eqref{sobolev} where they introduced a local isotropic Hardy space $h^1_{iso}(G)$
and prove that $m_{1, d-1} =  e^{ i \sqrt{\mathcal L}} (1 + \sqrt{\mathcal L})^{- (d-1)/2}$ is bounded from
$h^1_{iso}(G)$ to $L^1(G)$. Our main effort will be to establish that for $\theta \not=1$, 
$m_{\theta, d}$ is bounded from $h^1_{iso}(G)$ to $L^1(G)$.

\subsection*{A local, isotropic Hardy space}
In \cite{MSeeger} a local, isotropic Hardy space $h^1_{iso}(G)$ was introduced in their study
of the wave equation on Lie groups $G$ of Heisenberg type. The classical Hardy space $H^1(G)$ is defined
with respect to the homogeneous balls
$$
B_r(x,u) \ = \ \{(y,v) \in G:  \|(y,v)^{-1} \cdot (x,u) \|_K \, \le \, r \ \}
$$
so that $|B_r(x,u)| = c  r^{d_1 + 2 d_2} = c r^Q$. The space
$h^1_{ iso}(G)$ is defined with respect to {\it isotropic balls} skewed by the Heisenberg group translation
$$
B_r^E(x,u) \ := \ \{(y,v) \in G:  \|(y,v)^{-1} \cdot (x,u) \|_E \, \le \, r \ \}
$$
where $\|(x,u)\|_E := |x| + |u|$ is comparable to the classical Euclidean norm on ${\mathbb R}^{d_1} \times
{\mathbb R}^{d_2}$. Hence $|B_r^E(x,u)| = c r^d =  c r^{d_1 + d_2}$.

For $0<r \le 1$ we define a $(P, r)$ atom as a
function $b$ supported in the isotropic Heisenberg ball $B_r^E(P)$ with radius $r$ and
centre $P$, such that $\|b\|_2 \le r^{-d/2}$, and further, in the case where $r\le 1/2$, we require the cancellation 
$\int b = 0$. 
A function $f$ belongs to $h^1_{iso}(G)$ if $f = \sum c_{\nu} b_{\nu}$ 
where $b_{\nu}$ is a $(P_{\nu}, r_{\nu})$ atom for some centre $P_{\nu}$ and radius $r_{\nu}\le 1$
with $\sum |c_{\nu}| < \infty$. 
The norm of $f\in h^1_{iso}(G)$ is defined as
$$
\|f\|_{h^1_{iso}(G)} \ := \ \inf \sum_{\nu} |c_{\nu}|
$$
where the infimum is taken over all representations $f = \sum_{\nu} c_{\nu} b_{\nu}$ where each $b_{\nu}$
is an atom. The space $h^1_{ iso}(G)$ is a closed subspace of $L^1(G)$ and for every $1<p<2$,
$L^p(G)$ is a complex interpolation space for the couple $(h^1_{ iso}(G), L^2(G))$. See \cite{MSeeger}
where these facts are established. 

\section{A more general framework}\label{framework}
For the oscillating spectral multipliers $m_{\theta, \beta}$ in \eqref{osc-multipliers},
large $\lambda \gg 1$ are the important spectral frequencies and so on the
convolution kernel side, small balls are the important ones. Hence the local
space $h^1_{iso}(G)$ is the relevant one for these multipliers.

We now describe a more general framework of spectral multipliers which include both the class
of Mihlin-H\"ormander multipliers as well as the oscillating examples $m_{\theta, \beta}$. 
We introduce a general class of spectral multipliers which satisfy some
scale-invariant conditions (as in \eqref{hormander}) and which depend on an oscillation parameter $\theta \ge 0$,
a decay parameter $\beta \ge 0$ and a smoothness parameter $s\ge 0$. We introduce
the following class $M_{\theta, \beta, s}$ of
spectral multipliers. Fix a nontrivial cut-off $\chi \in C^{\infty}_c(0,\infty)$.  When $ 0 \le t \le 1$, 
we impose the standard uniform $L^2$ Sobolev norm condition 
\begin{equation}\label{hypothesis-lie-neg}
\sup_{0\le t \le 1} \| m(t \cdot) \chi\|_{L^2_s({\mathbb R}_{+})} \ < \ \infty
\end{equation}
and for $1\le t$, we impose the conditions
\begin{equation}\label{hypothesis-lie-pos}
\sup_{1\le t} \,  t^{\theta \beta/2} \|m(t \cdot ) \chi \|_{L^{\infty}({\mathbb R}_{+})}{\color{black}<\infty},  
\  \ {\color{black}\sup_{1\le t} t^{- \theta (2s - \beta) /2} \|m(t \cdot) \chi\|_{L^2_s({\mathbb R}_{+})} \ < \ \infty}. 
\end{equation}
The conditions \eqref{hypothesis-lie-neg} and \eqref{hypothesis-lie-pos} do not depend on the choice of $\chi$.
For $m \in M_{\theta, \beta, s}$, we define $C_m^{\theta,\beta,s}$ to be the maximum of
the quantities appearing in \eqref{hypothesis-lie-pos}.

When $\theta = 0$, these conditions reduce to the condition
$\sup_{t\ge 0} \|m(t \cdot) \chi\|_{L^2_s} < \infty$ and if this holds for some $s > d/2$, the fundamental work
of Hebisch \cite{hebisch} and M\"uller-Stein \cite{MS} {\color{black}show} that the spectral multiplier operator is bounded on all $L^p(G), 1<p<\infty$.

The examples 
$m_{\theta, \beta}(\lambda) =  e^{i \lambda^{\theta}} \lambda^{-\theta \beta/2} \chi(\lambda)$ when $\theta > 0$
from \eqref{osc-multipliers} satisfy conditions \eqref{hypothesis-lie-neg} and
\eqref{hypothesis-lie-pos}. 
Note that \eqref{hypothesis-lie-pos} expresses a growth in the $L^2_s$ Sobolev norm of $m(t \cdot) \chi$
(when $s > \beta/2$) and a decay in the $L^{\infty}$ norm of $m(t \cdot) \chi$.
If the $L^2$ Sobolev condition in \eqref{hypothesis-lie-pos} is satisfied for some $s>0$,  then by complex
interpolation, it is also satisfied for all $0\le s' \le s$ with $C_m^{\theta, \beta, s'} \lesssim C_m^{\theta, \beta, s}$
since the $s' = 0$ case
$\|m( t \cdot ) \chi\|_{L^2} \lesssim t^{- \theta \beta/2}$ is implied by the $L^{\infty}$ condition.

Define
\begin{equation}\label{M-beta-lie}
{\mathcal M}_{\beta}  \ := \ \bigcup_{\theta \ge 0, \theta \not= 1, s>d/2} M_{\theta, \beta, s} .
\end{equation}
This puts us in the position to employ analytic interpolation arguments 
to deduce that $m \in {\mathcal M}_{\beta}$ is an $L^p(G)$ multiplier in the range $ d |1/p - 1/2| \le \beta/2$
from a Hardy space bound for multiplier operators associated to $m\in {\mathcal M}_d$. 
Furthermore, from the invariance of ${\mathcal M}_d$ under multiplication by $\lambda^{iy}$
for any real $y$ (with resulting polynomial in $y$ bounds in \eqref{hypothesis-lie-neg} and \eqref{hypothesis-lie-pos}),
it suffices to show $m(\sqrt{{\mathcal L}}) : h^1_{iso}(G) \to L^1(G)$ for $m\in {\mathcal M}_d$.

The main result here is to establish the endpoint bound, that a strong-type $L^p(G)$ estimate holds for
$m(\sqrt{\mathcal L})$ with $m\in {\mathcal M}_{\beta}$ and $\beta < d$, when $\beta/2 = d |1/p - 1/2|$.

\begin{theorem}\label{endpoint-Q} If $\beta < d$, then for any $m \in {\mathcal M}_{\beta}$,
$m(\sqrt{{L}})$ is 
bounded on all $L^p(G)$ with $\beta/2 \ge d|1/p - 1/2|$.
\end{theorem}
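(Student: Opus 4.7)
The plan is to reduce the theorem to the single endpoint Hardy-space estimate
\[
\|m(\sqrt{\mathcal L})\|_{h^1_{iso}(G)\to L^1(G)} \ \lesssim \ C_m^{\theta, d, s}\qquad\text{for every } m\in {\mathcal M}_d,
\]
and then deploy Stein's analytic interpolation theorem, as foreshadowed at the end of Section~\ref{framework}. Since $\mathcal M_\beta$ is invariant under $m\mapsto \bar m$ and $m(\sqrt{\mathcal L})^\ast = \bar m(\sqrt{\mathcal L})$, a duality argument restricts matters to the range $1<p\le 2$, where $\beta/2 = d|1/p-1/2|$ corresponds to $\beta/d\in(0,1]$.

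Next I build the analytic family. After splitting off a low-frequency piece $(1-\eta(\lambda))m(\lambda)$ for a smooth cutoff $\eta$ vanishing near the origin (this piece is controlled by the standard Mihlin-H\"ormander bound since ${\mathfrak s}({\mathcal L}) = d/2$), I may assume $m$ is supported away from $0$. For such $m\in M_{\theta, \beta, s}$, set
\[
\mathfrak m_z(\lambda) \ := \ \lambda^{\theta(\beta - dz)/2}\, m(\lambda),
\]
so that $\mathfrak m_{\beta/d} = m$. On the line $\mathrm{Re}\, z = 0$ one has $|\mathfrak m_{iy}(\lambda)| = \lambda^{\theta\beta/2}|m(\lambda)| \lesssim 1$ by the $L^\infty$ half of \eqref{hypothesis-lie-pos}, so $\mathfrak m_{iy}(\sqrt{\mathcal L})$ is uniformly bounded on $L^2(G)$. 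On $\mathrm{Re}\, z = 1$ a direct check using both halves of \eqref{hypothesis-lie-pos} shows $\mathfrak m_{1+iy}\in M_{\theta, d, s}\subset {\mathcal M}_d$ with constant $C_{\mathfrak m_{1+iy}}^{\theta, d, s}\lesssim (1+|y|)^A$; the polynomial factor arises from differentiating $\lambda^{-i\theta dy/2}$ in computing Sobolev norms.

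Granting the displayed Hardy-space bound applied to $\mathfrak m_{1+iy}$, Stein's analytic interpolation theorem for the Banach couples $(L^2, h^1_{iso})$ on the source side and $(L^2, L^1)$ on the target side yields $L^p(G)\to L^p(G)$ boundedness of $\mathfrak m_{\beta/d} = m$. On the target side, $[L^2, L^1]_{\beta/d} = L^{p}$ by Riesz-Thorin with $1/p = 1/2 + \beta/(2d)$; on the source side, the interpolation identity recorded from \cite{MSeeger}, namely $L^p(G) = [h^1_{iso}(G), L^2(G)]_{1-\beta/d}$, delivers the same $p$. This is precisely the exponent with $\beta/2 = d|1/p-1/2|$.

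I expect the main obstacle to be the Hardy-space estimate itself, which the paper defers to Sections~\ref{main-H1-estimate}--\ref{final-step}. The difficulty is twofold. First, $h^1_{iso}(G)$ is modelled on Euclidean-scale isotropic balls, whereas $\sqrt{\mathcal L}$ and its wave propagator are naturally adapted to the nonisotropic Kor\'anyi dilations \eqref{auto-dilations}, so any atomic argument must carefully reconcile the wave-kernel geometry with the isotropic atomic geometry. Second, the dyadic $L^1$ operator bound \eqref{L1} grows as $(1+|\lambda|)^{(d-1)/2}$, which is a half-derivative short of what naive summation across scales would require for the endpoint $\beta = d$; recovering this half derivative is exactly where the fine decomposition of the wave operator from \cite{MSeeger} must be combined with the pointwise propagation estimate \eqref{finite-speed} and a delicate atomic argument.
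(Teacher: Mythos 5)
Your proposal follows essentially the same route as the paper: cut off the low frequencies via the Mihlin--H\"ormander bound, embed $m$ in the analytic family $\mathfrak m_z(\lambda) = \lambda^{\theta(\beta-dz)/2}m(\lambda)$, verify the uniform $L^2$ bound on ${\rm Re}\,z = 0$ and the membership $\mathfrak m_{1+iy}\in M_{\theta,d,s}$ with polynomial growth in $|y|$ on ${\rm Re}\,z=1$, and conclude by Stein analytic interpolation using the complex-interpolation identity $[h^1_{iso}(G),L^2(G)]$ from \cite{MSeeger}. You also correctly identify that the whole weight of the argument falls on the Hardy-space bound $h^1_{iso}\to L^1$ (Theorem~\ref{hardy-iso}), which the paper proves in the subsequent sections and which your reduction assumes as given.
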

We will use an analytic interpolation argument to establish Theorem \ref{endpoint-Q}. First we
we smoothly decompose the multiplier
\begin{equation}\label{high-low}
m(t) \ = \  m_{small}(t) + m_{large}(t)
\end{equation}
into high and low frequency parts 
where $m_{small}(t) = m(t)$ for small $0<t \lesssim 1$ and 
$m_{large}(t) = m(t)$ for large $t \gg 1$.
We note that
$m_{small}$ is a Mihlin-H\"ormander multiplier and we appeal to established results (see for instance \cite{hebisch}). 

Therefore it suffices to  treat the operator
$m_{large} (\sqrt{\mathcal L})$. The key estimate for these operators is contained 
in the following theorem. 

\begin{theorem}\label{hardy-iso} Suppose that $m \in M_{\theta, d, s}$ for some $\theta > 0, \, \theta\not=1$ and some $s>d/2$.
Let $m_{large}$ be the large frequency part of $m$
as described in \eqref{high-low}. Then 
$$
\|m_{large}(\sqrt{\mathcal L}) f \|_{L^1(G)} \ \lesssim \ C_m^{\theta,d,s} \, \|f\|_{h^1_{{iso}}(G)} 
$$
where we recall $C_m^{\theta, d, s}$ is the maximum of the quantities appearing in \eqref{hypothesis-lie-pos}.
\end{theorem}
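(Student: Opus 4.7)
By the atomic decomposition of $h^1_{iso}(G)$, it suffices to prove the uniform bound
$$
\|m_{large}(\sqrt{\mathcal L}) b\|_{L^1(G)} \ \lesssim \ C_m^{\theta, d, s}
$$
for every $(P, r)$-atom $b$, and by the left-invariance of $m_{large}(\sqrt{\mathcal L})$ I may take $P = 0$. Fix $\phi \in C^\infty_c(1/2, 2)$ with $\sum_{j} \phi(2^{-j}\lambda) = 1$ on $(0, \infty)$ and decompose
$$
m_{large}(\lambda) \ = \ \sum_{j \ge j_0} m_j(\lambda), \qquad m_j(\lambda) \ := \ m(\lambda) \phi(2^{-j}\lambda),
$$
with $j_0$ large enough. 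Writing $K_j$ for the convolution kernel of $m_j(\sqrt{\mathcal L})$ and $j_r \ge 0$ for the integer with $2^{-j_r} \sim r$, the task becomes $\sum_{j \ge j_0} \|K_j * b\|_{L^1(G)} \lesssim C_m^{\theta, d, s}$, and I would split this sum at $j = j_r$.

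For $j \ge j_r$, where the kernel is sharper than the atom, I would combine the representation $m_j(\sqrt{\mathcal L}) = \int \widehat{m_j}(\tau) e^{i\tau\sqrt{\mathcal L}} d\tau$ with the rescaled form of \eqref{finite-speed} and stationary phase (using $\theta \ne 1$ to localise $\tau$ near $\theta \cdot 2^{j(\theta - 1)}$) to identify an effective isotropic support $E_{j, r}$ for $K_j * b$. Cauchy-Schwarz then yields
$$
\|K_j * b\|_{L^1} \ \le \ |E_{j, r}|^{1/2} \|K_j * b\|_{L^2},
$$
and combining $\|K_j * b\|_{L^2} \le \|m^j\|_{L^\infty} \|b\|_{L^2} \lesssim 2^{-j \theta d/2} r^{-d/2}$ (from the $L^\infty$ bound in \eqref{hypothesis-lie-pos}) with an interpolated $L^2_s$ estimate, the condition $s > d/2$ should deliver a geometric series in $j - j_r$.

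For $j_0 \le j \le j_r$, where $K_j$ is smooth on the atom scale, I would exploit the cancellation $\int b = 0$ (valid for $r \le 1/2$; the case $r > 1/2$ involves only finitely many $j$ and is handled by the $L^2$ argument directly). Writing
$$
K_j * b(Q) \ = \ \int [K_j(P^{-1} Q) - K_j(Q)] \, b(P) \, dP
$$
and applying a group Taylor inequality yields a factor of $r$ times a left-invariant derivative of $K_j$; the required derivative bounds come from the fine decomposition of \cite{MSeeger} recalled in Section \ref{decomposition}, giving the remaining geometric decay in $j_r - j$.

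The main obstacle, and the reason the argument must be completed in Section \ref{final-step}, is the interplay between the M\"uller-Seeger wave kernel decomposition, which is tailored to the linear phase and unit propagation speed of $e^{i\sqrt{\mathcal L}}$, and the nonlinear phase $\lambda^\theta$ with $\theta \ne 1$ appearing here. Transporting the stationary phase localisation of $\widehat{m_j}$ through that decomposition while respecting the isotropic ball geometry of $h^1_{iso}(G)$ requires careful bookkeeping of scales, and the borderline condition $\beta = d$ leaves no slack at the transition $j = j_r$: the extra Sobolev regularity $s > d/2$ must be spent precisely there to absorb the otherwise logarithmic loss.
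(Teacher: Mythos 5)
Your outline correctly identifies the two broad phases of the argument (dyadic decomposition $m_{large} = \sum_j m_j$, a split of the $j$-sum around the scale of the atom, and the use of the M\"uller--Seeger decomposition to exploit atom cancellation), and it correctly anticipates that the atomic reduction is the first step. But as stated the argument has several genuine gaps, both in the treatment of the ``large $j$'' regime and in the cancellation regime, and the split you propose is not fine enough to make either side work.

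For the range $j \gtrsim j_r$, the bound $\|K_j * b\|_{L^1} \le |E_{j,r}|^{1/2}\|K_j*b\|_{L^2}$ with $|E_{j,r}|$ the stationary-phase support and $\|K_j*b\|_{L^2} \lesssim 2^{-j\theta d/2} r^{-d/2}$ does not close. The rescaled wave kernel $(K_\tau)_g$, $g = 2^j/\tau$, has Kor\'anyi support $\|\delta_g(x,u)\|_K \lesssim 1$, so with $|\tau| \sim 2^{j\theta}$ the isotropic (Euclidean) volume of the kernel's support is of order $2^{j(\theta-1)Q}$; the resulting exponent $(\theta-1)Q/2 - \theta d/2 = \theta d_2/2 - Q/2$ is positive when $\theta$ is large, so Cauchy--Schwarz over the effective support fails to give a convergent geometric series. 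What the paper actually does in its analogue of this region ($\mathcal{A}_+$) is a two-scale argument: it introduces a free parameter $\Lambda$, applies Cauchy--Schwarz only over $\|(x,u)\|_E \lesssim 2^{L+\Lambda}$, estimates the tail $\|(x,u)\|_E \gtrsim 2^{L+\Lambda}$ by combining the finite propagation speed pointwise bound \eqref{finite-speed} with the M\"uller--Seeger $L^1$ operator bound $\|\chi(\tau^{-1}\sqrt{\mathcal L})e^{i\sqrt{\mathcal L}}\|_{L^1\to L^1} \lesssim |\tau|^{(d-1)/2}$ and the $L^2_s$ Sobolev condition, and then optimizes in $\Lambda$. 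The resulting decay $2^{-\epsilon(j+L)}$ cannot be obtained from the volume of the effective support alone. Relatedly, stationary phase on $\widehat{m^j}$ is unavailable for the general class $M_{\theta, d, s}$ (which has no explicit phase); only the $L^\infty$ decay and $L^2_s$ growth conditions \eqref{hypothesis-lie-pos} may be used, and this is exactly what the tail estimate exploits via Cauchy--Schwarz in $\tau$. There is also a further dichotomy you miss: when $\theta < 1$ the propagation scale $2^{j(\theta-1)}$ may fall below the atom scale $2^L$, and the sign of $j(1-\theta)+L$ must be tracked (the paper's sets $\mathcal{A}$ and $\mathcal{B}$) because in the $\mathcal{B}$ regime one has exponential gain directly from the $L^1$ bound without any optimization or cancellation.

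For $j \lesssim j_r$, ``a group Taylor inequality yields a factor of $r$ times a left-invariant derivative'' is precisely the step that does not work naively, and it is worth pinpointing why. Writing $(y,v)^{-1}\cdot(x,u) = (x-y, u-v + \tfrac12\langle Jx,y\rangle)$, the central increment is not $O(r)$: it has a piece $\langle Jx,y\rangle$ of size $|x| \cdot |y| \sim |x| \cdot r$, and $|x|$ can be as large as the propagation distance. The gain from cancellation is therefore $2^j \|(y,v)\|_K$ rather than $2^j\|(y,v)\|_E$, and the Kor\'anyi norm of a point in the isotropic ball $B^E_r$ can be $\sim r^{1/2} \gg r$. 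This is the geometry incompatibility the paper flags. The resolution requires the finer decomposition $\mathcal{W}_\tau = \sum_n \mathcal{W}_{\tau,n}$ in the central frequency $|U| \sim 2^{j+n}$: the cancellation estimate becomes $\|a_B * H_{j,n}\|_{L^1} \lesssim 2^n 2^{j+L}$ with an unavoidable loss $2^n$, which must be traded off against the gain $2^{-n(d_1-1)/2}$ in the $L^1$ operator norm $\|\mathcal{W}_{\tau,n}\|_{L^1\to L^1}$ by splitting the $n$-sum at $2^n \sim 2^{-\epsilon(j+L)}$. A separate argument, using Proposition \ref{K0}, is needed for the $\mathcal{V}_\tau$ piece of the wave kernel where no cancellation is used at all — instead the kernel is shown to be negligible on the relevant region. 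Your last paragraph gestures at these difficulties, but the proposal as written does not contain the mechanisms that resolve them.
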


To see how to complete the proof of Theorem \ref{endpoint-Q} from Theorem \ref{hardy-iso}, let
$T(\sqrt{\mathcal L}) = m_{large}(\sqrt{\mathcal L})$ be the operator associated to the large frequencies
of an $m$ in the statement of Theorem \ref{endpoint-Q}. 
It suffices
to establish an $L^p(G)$ bound at the endpoint $\beta/2 = d |1/p - 1/2|$ for $T(\sqrt{\mathcal L})$
when
$m \in {\mathcal M}_{\beta}$. In particular $m \in M_{\theta, \beta, s}$ for some $\theta \ge 0$, $\theta \not= 1$
and some $s > d/2$. It suffices to consider the case $\theta>0$ since the case $\theta = 0$ corresponds to Mihlin-H\"ormander
multipliers which have been successfully treated in the setting of groups of Heisenberg type; see  \cite{hebisch} or \cite{MS}. 

For $z \in {\mathbb C}$ such that ${\rm Re}(z) \in [0,1]$, consider
$$
{\mathfrak m}^z(\lambda) \ := \ m_{large}(\lambda) \lambda^{\theta /2 (\beta - dz)} 
$$
and denote by $T_z (\sqrt{\mathcal L})$ the associated spectral multiplier operator. Note that $T_{\sigma} = T$
where $\sigma = \beta/d$ and $1/p = \sigma + (1- \sigma)/2$ when $1< p< 2$ and $\beta/2 = d|1/p - 1/2|$.
If ${\rm Re}(z) = 0$,
then $\|{\mathfrak m}^{iy}\|_{L^{\infty}} \le C_{m}^{\theta,\beta,s}$ since $m \in M_{\theta, \beta, s}$ and hence 
$T_{iy}(\sqrt{\mathcal L})$ is bounded
on $L^2(G)$,  uniformly in $y\in {\mathbb R}$ . When $z = 1 + iy$ we have ${\mathfrak m}^{1+iy} \in  M_{\theta, d, s}$
with 
$$
C_{{\mathfrak m}^{1+iy}}^{\theta,d,s} \ \lesssim \ |y|^s \, C_m^{\theta,\beta,s}
$$
implying that 
$T_{1+iy}(\sqrt{\mathcal L})$ maps $h^1_{iso}(G)$ into $L^1(G)$ with polynomial bounds
in $y$  by Theorem \ref{hardy-iso}.
Hence $T(\sqrt{\mathcal L})$ is bounded on $L^p(G)$ by Stein's analytic interpolation theorem.

In proving Theorem \ref{hardy-iso}, we will not explicitly state the constants $C_m^{\theta,d,s}$ when applying the condition \eqref{hypothesis-lie-pos}, but the dependence will be clear whenever this condition is applied.

We follow an argument developed in \cite{CW} which establishes a Hardy space $H^1(G)$ bound for oscillating multipliers on general stratified groups. Our original hope was to adapt this argument, using only the bounds \eqref{L1} and \eqref{finite-speed} from \cite{MSeeger}.
However, at the final step of the argument, we need to employ the full analysis of the wave operator
as detailed in \cite{MSeeger}.

\section{The first part of the proof of Theorem \ref{hardy-iso}}\label{main-H1-estimate}

 We fix a spectral multiplier $m \in M_{\theta, d, s}$  for some $\theta \not= 1$ with $\theta > 0$
and some $s > d/2$ and consider the large frequency part $m_{large}$ as described in \eqref{high-low}. 

We fix an atom $a_B$ supported in an isotropic, Heisenberg ball $B = B^E_r(P)$ with $r\le 1$ and we want to prove
\begin{equation}\label{L1-atom}
\int_G |T(\sqrt{\mathcal L}) a_B (x) | \, dx \ \lesssim \ 1
\end{equation}
where $T(\sqrt{\mathcal L}) = m_{large}(\sqrt{\mathcal L})$. We decompose $m_{large} = \sum_{j>0} m_j$
where $m_j(t) = m(t) \phi(2^{-j} t)$ for an appropriate smooth $\phi$ supported in $[1/2,2]$. Hence
$T(\sqrt{\mathcal L}) =
\sum_{j> 0} m_j (\sqrt{\mathcal L})$ and only the conditions for $m$ in \eqref{hypothesis-lie-pos} are relevant.

Without loss of generality we may assume that the ball $B$ is centered at the origin, $P = 0$. 
Let $L\le 0$ be such that $2^{L-1} < r \le 2^L$.
The $L^2$ boundedness of $T(\sqrt{\mathcal L})$
implies that, for any fixed $C>0$,
$$
\int_{\|(x,u)\|_E\le C 2^L} |T(\sqrt{\mathcal L}) a_B(x,u)| dx du \ \lesssim \ 1
$$ 
via the Cauchy-Schwarz inequality and so it suffices
to show that 
\begin{equation}\label{desire-2}
\int_{\|(x,u)\|_E \gg 2^L} |T(\sqrt{\mathcal L}) a_B (x,u)| \, dx du \ \lesssim \ 1.
\end{equation}

We bound the integral in \eqref{desire-2} by 
$$
{\mathcal I} \ := \ \int_{\|(x,u)\|_E \gg 2^L} |m_j(\sqrt{\mathcal L}) a_B (x,u)| dx du .
$$
Writing $m^j(\lambda) := m_j(2^j \lambda) = m(2^j \lambda) \phi(\lambda)$,
we have $m_j(\sqrt{\mathcal L}) = m^j (2^{-j} \sqrt{\mathcal L})$. Using
\eqref{fourier-rep}, we write 
$$
m^j(\sqrt{\mathcal L}) \ = \ \int {\widehat{ m}^j}(\tau) \chi( \sqrt{\mathcal L}) 
e^{i  \tau \sqrt{\mathcal L}} \, d\tau
$$
where $\chi \equiv 1$ on the support of $\phi$ so that
\begin{equation}\label{mj-formula}
m_j(\sqrt{\mathcal L}) \ = \ \int {\widehat{ m}^j}(\tau) \chi( 2^{-j} \sqrt{\mathcal L}) 
e^{i  2^{-j} \tau \sqrt{\mathcal L}} \, d\tau.
\end{equation}
We remark that in the $\tau$ integral above, we may assume $|\tau| \gg 1$ since
$$
\int_{|\tau| \lesssim 1}  |{\widehat{m}^j}(\tau)| \|\chi(2^{-j} \sqrt{\mathcal L}) 
e^{i 2^{-j} \tau \sqrt{\mathcal L}}\|_{L^1 \to L^1} \, d\tau  \ \lesssim \
\int_{|\tau| \lesssim 1}  |{\widehat{m}^j}(\tau)| \, d\tau \ \lesssim \ 2^{-j\theta d/2}
$$
by the uniform boundedness of $ \|\chi(2^{-j} \sqrt{\mathcal L}) 
e^{i 2^{-j} \tau \sqrt{\mathcal L}}\|_{L^1 \to L^1}$ for small $|\tau|$ (by dilation-invariance of the
$L^1$ operator norm and an application of Hulanicki's theorem \cite{h}),
and the $L^{\infty}$ assumption on ${m}^j$. This is summable over $j>0$ since $\theta > 0$.

Henceforth we shall assume $\tau$ is large in the integral \eqref{mj-formula} representing
$m_j(\sqrt{\mathcal L})$.

We split the integral ${\mathcal I} = I + II$ into two parts where
\begin{align*}
I + II :=
\sum_{j\in \mathcal{A}} \int_{\|(x,u)\|_E \gg 2^L} |m_j(\sqrt{\mathcal L}) a_B (x,u)| dx du \ \\ + \
\sum_{j \in \mathcal{B}} \int_{\|(x,u)\|_E \gg 2^L} |m_j(\sqrt{\mathcal L}) a_B (x,u)| dx du
\end{align*}

where $\mathcal{A} = \{ j > 0: \  j(1-\theta) + L \le 0 \}$ and 
$\mathcal{B} =  \{ j > 0: \ j(1-\theta) + L > 0 \}$. 
Note that the set $\mathcal{B}$ is empty for $\theta>1$.

For $j\in \mathcal{B}$, 
\begin{align*}
\int_{\|(x,u)\|_E \gg 2^L} &|m_j(\sqrt{\mathcal L}) a_B(x,u)| dx du \  
\\
&\leq\int_G |a_B(y,v)| 
\Bigl[ \int_{\|(x,u)\|_E \gg 2^L} |m_j(\sqrt{\mathcal L})((y,v)^{-1} \cdot (x,u))| dx du \Bigr] dy dv 
\end{align*}
and when $\|(x,u)\|_E \gg 2^L$, we have $\|(y,v)^{-1}\cdot (x,u)\|_E \ge 2^L$ whenever $\|(y,v)\|_E \le 2^L$.
This follows easily from the group law
$$
(y,v)^{-1}\cdot (x,u) \ = \ (x - y, u - v + 1/2 \langle J x, y \rangle ),
$$
noting that $L\le 0$.
Hence 
$$
\int_{\|(x,u)\|_E \gg 2^L} |m_j(\sqrt{\mathcal L}) a_B(x,u)| dx du \  \le \ 
\int_{\|(x,u)\|_E \geq 2^L} |m_j(\sqrt{\mathcal L})(x,u)| dx du, 
$$
where we used the fact that for atoms  $\|a_B\|_{L^1(G)}\lesssim 1$.

Let us denote by $K_{\tau}(x,u)$ the convolution kernel of 
$\chi(\tau^{-1} \sqrt{\mathcal L}) e^{i \sqrt{\mathcal L}}$. Then for $g = 2^j/\tau$,
$(K_{\tau}(x,u))_g := g^Q K_{\tau}(\delta_g (x,u))$
is the convolution kernel associated to $\chi(2^{-j} \sqrt{\mathcal L}) e^{i 2^{-j} \tau \sqrt{\mathcal L}}$. Hence
by \eqref{mj-formula},
$$
\int_{\|(x,u)\|_E \ge 2^L} |m_j(\sqrt{\mathcal L})(x,u)| dx du \ \le \ 
\int  |{\widehat{m}^j}(\tau)| \Bigl[ \int_{\|(x,u)\|_E \ge 2^L} |(K_{\tau})_g(x,u)| dx du \Bigr] d\tau .
$$
But $\|\delta_{g^{-1}} (x,u) \|_E \ge 2^L$ implies $\|\delta_{\tau}(x,u)\|_E \ge 2^{j+L}$ since $j > 0$.
Hence we have
$$
\int_{\|(x,u)\|_E \ge 2^L} |(K_{\tau})_g(x,u)| dx du \ \le \ \int_{\|\delta_{\tau}(x,u)\|_E \ge 2^{j+L}}
|K_{\tau}(x,u)| \, dx du.
$$

Recall the pointwise estimate \eqref{finite-speed} for $K_{\tau}(x,u)$ (valid for large $\tau$);
$$
|K_{\tau}(x,u)| \ \lesssim_N  \ |\tau|^M \, \|\delta_{\tau} (x,u) \|_E^{-N}
$$
for some $M\ge 1$ and any $N\ge 1$. Hence
if $|\tau| \le 2^{j+L}$, choosing $N$ large enough we have
$$
\int_{\|\delta_{\tau}(x,u)\|_E \ge 2^{j+L}} |K_{\tau}(x,u)| \, dx du \ \lesssim_N \ 2^{-N(j+L)}
$$
and therefore
\begin{align*}
\int_{|\tau| \le 2^{j+L}}  
|{\widehat{m}^j}(\tau)| \Bigl[ \int_{\|(x,u)\|_E \ge 2^L} |(K_{\tau})_g(x,u)| dx du \Bigr] d\tau\\
\ \lesssim_N \ 2^{-N(j+L)} \int_{|\tau| \le 2^{j+L}} |{\widehat{ m}^j}(\tau)| \, d\tau
\end{align*}
which by Cauchy-Schwarz and \eqref{hypothesis-lie-pos} is at most 
$$
2^{-(N-1/2)(j+L)} \| {\widehat{m}^j} \|_{L^2} \ \lesssim \ 2^{-(N-1/2)(j+L)} 2^{-j\theta d/2}
\ \le \ 2^{-(N-1/2) (j + L)}.
$$
Note that $j+L>0$ for $j\in\mathcal{B}$, when $\mathcal{B}$ is non empty. Indeed, in this case we must have $0<\theta<1$, which implies $j+L>j(1-\theta)+L>0$.

To treat the remaining part
$$
\int_{|\tau| \ge 2^{j+L}}  
|{\widehat{m}^j}(\tau)| \Bigl[ \int_{\|(x,u)\|_E \ge 2^L} |(K_{\tau})_g(x,u)| dx du \Bigr] d\tau,
$$
we promote the integration over $\|(x,u)\|_E \ge 2^L$ to all of $G$ so that the inner integral is at most
\begin{align*}
\int_{|\tau| \ge 2^{j+L}}  
&|{\widehat{m}^j}(\tau)| \Bigl[ \int_{G} |(K_{\tau})_g(x,u)| dx du \Bigr] d\tau \\&=
\int_{|\tau| \ge 2^{j+L}}  
|{\widehat{m}^j}(\tau)| \Bigl[ \int_{G} |K_{\tau}(x,u)| dx du \Bigr] d\tau
\lesssim \ \int_{|\tau| \ge 2^{j+L}}  |{\widehat{m}^j}(\tau)| |\tau|^{(d-1)/2} \
\\ &\le \
2^{-(\sigma -1/2)(j+L)} \|{{m}^j}\|_{L^2_{\sigma + (d-1)/2}} \ \lesssim \
2^{-(\sigma -1/2)(j(1-\theta) + L)}
\end{align*}
for some $\sigma > 1/2$. 
The first inequality uses \eqref{L1} in Theorem \ref{MS-basic} followed by the Cauchy-Schwarz inequality and
our hypothesis \eqref{hypothesis-lie-pos} on the $L^2$ Sobolev norms of ${m}^j$. Therefore 
$$
\int_{\|(x,u)\|_E \ge 2^L} |m_j(\sqrt{\mathcal L})(x,u)| \, dx du \ \lesssim \ 
2^{-(\sigma - 1/2)(j(1-\theta) + L)}
$$ 
and so
$$
\int_{\|(x,u)\|_E \gg 2^L} |m_j(\sqrt{\mathcal L}) a_B(x,u)| \, dx du \ \lesssim \ 
2^{-(\sigma - 1/2)(j(1-\theta) + L)},
$$
showing that we can sum over all $j\in \mathcal{B}$ (since $\theta \not= 1$) and uniformly bound $II$.   

For $I$, we split $\mathcal{A} = \mathcal{A}_{-} \cup \mathcal{A}_{+}$ further such that $\mathcal{A}_{-} = \{ j \in \mathcal{A}: j + L \le 0\}$ and
$\mathcal{A}_{+} = \{j \in \mathcal{A}: j + L > 0\}$. This splits $I = I_{-} + I_{+}$ accordingly.

For the sum over $j \in \mathcal{A}_{+}$, we split each integral in $I_{+}$ into two parts
$$
 \int_{\|(x,u)\|_E \gg 2^L} |m_j(\sqrt{\mathcal L})  a_B (x,u)| dx du \ = \ S_{\Lambda, j} + L_{\Lambda, j}
 $$
 for some positive $\Lambda > 0$ where
 $$
 S_{\Lambda, j} \ := \ \int_{2^{L} \ll \|(x,u)\|_E \le 2^{L+\Lambda}} |m_j(\sqrt{\mathcal L})  a_B (x,u)| dx du 
$$
and $L_{\Lambda, j}$ is defined similarly but with the integration taken over $(x,u)$ such that 
$2^{L+\Lambda} \le \|(x,u)\|_E$.

For $S_{\Lambda, j}$ we use Cauchy-Schwarz, the $L^2$ bound $\|a_B\|_2 \le 2^{-Ld/2}$ on our atom
and our $L^{\infty}$ hypothesis on $m_j$  to see that
$$
S_{\Lambda, j} \ \le \ 2^{(L + \Lambda)d/2} \| m_j(\sqrt{\mathcal L}) a_B \|_{L^2} \ \lesssim \
2^{(L + \Lambda)d/2} 2^{-j\theta d/2} \|a_B\|_{L^2} \ \le \ 2^{\Lambda d/2} 2^{-j\theta d/2}.
$$
Next we will show that
\begin{equation}\label{L-Lambda}
L_{\Lambda, j} \ \lesssim_{\sigma} \ 2^{(\sigma - 1/2) j \theta} 2^{- (\sigma - 1/2) (j + L + \Lambda)}
\end{equation}
for some $\sigma > 1/2$. Choosing $\Lambda$ such that 
$$
2^{\Lambda} \ = \ 2^{j\theta} 2^{- \frac{\sigma -1/2}{\sigma - 1/2 + d/2} (j+ L)}
$$
optimizes the bound for the sum $S_{\Lambda, j} + L_{\Lambda, j}$. 

With this choice of $\Lambda$ (which is $>0$ since $j\in \mathcal{A}$) we have 
$S_{\Lambda, j} + L_{\Lambda, j} \lesssim  2^{-\epsilon (j + L)}$ where 
$$
\epsilon \ = \ \frac{d}{2} \cdot \frac{\sigma -1/2}{\sigma - 1/2 + d/2} \ > \ 0
$$
and this shows that 
$$
I_{+} \ = \ \sum_{j\in A_{+}} \bigl[ S_{\Lambda, j} + L_{\Lambda, j} \bigr]  \ \lesssim \ 1
$$
is uniformly bounded. We now turn to establish \eqref{L-Lambda}.

Proceeding as for terms $j \in \mathcal{B}$, and using the formula \eqref{mj-formula}, we see that
$$
L_{\Lambda, j} \ \le \ \int |{\widehat{m}^j}(\tau)| \Bigl[ \int_{2^{L+\Lambda} \le \|(x,u)\|_E} |(K_{\tau})_g(x,u)|
 \, dx du \Bigr] \, d\tau \ =: \ L_{\Lambda, j}'
$$
where we recall that $g = 2^j/\tau$, 
$(K_{\tau})_g(x,u) = g^Q K_{\tau}(\delta_g (x,u))$ and $K_{\tau}(x,u)$ is the convolution kernel of 
$\chi(\tau^{-1} \sqrt{\mathcal L}) e^{i \sqrt{\mathcal L}}$. 
We split $L_{\Lambda, j}' = III + IV$ where
$$
III \ := \ 
 \int_{|\tau| \le 2^{j+L+\Lambda}} 
 |{\widehat{m}^j}(\tau)| \Bigl[ \int_{2^{L+\Lambda} \le \|(x,u)\|_E} |(K_{\tau})_g(x,u)| \, dx du \Bigr] \, d\tau
 $$
 and $IV$ is defined in the same way except the integration in $\tau$ is over $|\tau| \ge 2^{j+L+\Lambda}$.
 
Again since
$\|\delta_{g^{-1}} (x,u) \|_E \ge 2^{L+\Lambda}$ implies 
$\|\delta_{\tau}(x,u)\|_E \ge 2^{j+L+\Lambda}$ since $j  > 0$,
we have
$$
\int_{\|(x,u)\|_E \ge 2^{L+\Lambda}} |(K_{\tau})_g(x,u)| dx du \ \le \ \int_{\|\delta_{\tau}(x,u)\|_E \ge 2^{j+L+\Lambda}}
|K_{\tau}(x,u)| \, dx du
$$
and so 
$$
III \ \le \ 
 \int_{|\tau| \le 2^{j+L+\Lambda}} 
 |{\widehat{m}^j}(\tau)| \Bigl[ \int_{2^{j+L+\Lambda} \le \|\delta_{\tau}(x,u)\|_E} |K_{\tau}(x,u)| \, dx du \Bigr] \, d\tau
 $$
 which by \eqref{finite-speed} (recall we have reduced to large $\tau$) implies that
 $$
 III \ \lesssim_N \ 2^{-N(j+L+\Lambda)} \int_{|\tau| \le 2^{j+L+\Lambda}} |{\widehat{m}^j}(\tau)| \, d\tau
 \ \lesssim \ 2^{-(N-1/2)(j+L+\Lambda)} 2^{-j\theta d/2}
 $$
 for any $N\ge 1$.
For IV, since $\tau$ is large we can use \eqref{L1},
Cauchy-Schwarz and our $L^2$ Sobolev condition on ${m}^j$ to see that
$$
IV \ \lesssim \int_{2^{j+L+\Lambda} \le |\tau|}  |{\widehat{m}^j}(\tau)| |\tau|^{(d-1)/2} \, d\tau \ \lesssim_{\sigma} \
2^{-(\sigma -1/2)(j+L+\Lambda)} 2^{(\sigma - 1/2) j \theta}
$$
for some $\sigma > 1/2$. Hence
$L_{\Lambda, j} \le III + IV \lesssim 2^{(\sigma - 1/2) j \theta} 2^{-(\sigma-1/2)(j + L + \Lambda)}$ holds
for some $\sigma > 1/2$, establishing \eqref{L-Lambda}.

Finally we turn to the sum over $j\in \mathcal{A}_{-}$ where $j+L \le 0$ and it is here that we would like to use
the cancellation of the atom $a_B$. We are allowed to use the cancellation since if there are $j$'s such that $j+L\leq0$, we must have $L\leq-1$, so that the atom is supported in a ball of radius $r\leq1/2$. We have
$$
m_j (\sqrt{\mathcal L})  a_B (x,u) = \int \bigl[ m_j(\sqrt{\mathcal L})((y,v)^{-1} \cdot (x,u)) - m_j(\sqrt{\mathcal L})(x,u)\bigr]
a_B(y,v) dy dv,
$$
and through basic estimates we can bound the difference by $\|\delta_{2^j}(y,v)\|_K = 2^j \|(y,v)\|_K$. If this Kor\'anyi norm could be controlled by
the Euclidean norm, then since $\|(y,v)\|_E \le 2^L$ for $(y,v) \in B$, we would gain a factor of $2^{j+L}$ which
is summable for $j\in \mathcal{A}_{-}$.  It is here where we see the two incompatible geometries (coming from Kor\'anyi norm on the
one hand, and the Euclidean norm on the other) creating an obstacle. 

To overcome this, we will need to employ 
a refined decomposition in \cite{MSeeger} of the
operator $\chi(\tau^{-1} \sqrt{\mathcal L}) e^{i\sqrt{\mathcal L}}$ appearing in the formula
\eqref{mj-formula} 
$$
m_j(\sqrt{\mathcal L}) \ = \ \int {\widehat{m}^j}(\tau) \chi(2^{-j} \sqrt{\mathcal L}) 
e^{i 2^{-j} \tau \sqrt{\mathcal  L}} \, d\tau .
$$
In the next section we will describe this decomposition but now we make a couple preliminary remarks. First
recall that we may assume $\tau$ is large in the above integral.

Second we can easily handle any error terms $E_{\tau,j}({\mathcal L})$ arising in our decomposition of
$\chi(2^{-j} \sqrt{\mathcal L}) e^{i 2^{-j} \tau \sqrt{\mathcal L}}$ which have a uniform $L^1$ operator norm of $O(|\tau|^{-2})$
since
\begin{equation}\label{error-tau-2}
\int_{|\tau|\gg 1}  |{\widehat{m}^j}(\tau)|
\|E_{\tau, j}{\mathcal L}\|_{L^1 \to L^1} \, d\tau  \ \lesssim \
\int_{|\tau| \gg 1}  |{\widehat{m}^j}(\tau)| \, |\tau|^{-2} d\tau \ \lesssim \ 2^{-j\theta d/2}
\end{equation}
which again is summable for $j> 0$.

\section{A refined decomposition of $\chi(\tau^{-1} \sqrt{\mathcal L}) e^{i\sqrt{\mathcal L}}$}\label{decomposition}

In this section we will recall a decomposition of the operator
$\chi(\tau^{-1} \sqrt{\mathcal L}) e^{i\sqrt{\mathcal L}}$ from \cite{MSeeger} which we will need
to carry out the final step in the argument. For the reader's convenience, we give an outline of how this decomposition is derived
but refer to \cite{MSeeger} for the precise details of each step.

The decomposition is based on the following subordination
formula which relates the wave operator $e^{i \sqrt{\mathcal L}}$ to the Schr\"odinger group $\{e^{i t {\mathcal L}}\}$.
See Proposition 4.1 in \cite{MSeeger}.

\begin{lemma}\label{subordination} Let $\chi_1 \in C^{\infty}_c$ be equal to 1 on the support of $\chi$. Then
there exists smooth functions $a_{\tau}(s)$ and $\rho_{\tau}(s)$ supported for $s \sim 1$ such that
\begin{equation}\label{subordination-formula}
\chi(\tau^{-1} \sqrt{\mathcal L}) e^{i \sqrt{\mathcal L}} \ = \ \chi_1(\tau^{-2} {\mathcal L}) \sqrt{\tau}
\int e^{i \frac{\tau}{4s}} a_{\tau}(s) e^{i s {\mathcal L}/ \tau} ds \ + \ \rho_{\tau}(\tau^{-2} {\mathcal L}).
\end{equation}
Furthermore the $L^p(G)$ operator norm of $\rho_{\tau}(\tau^{-2} {\mathcal L})$ is $O(|\tau|^{-N})$
for every $N\ge 1$.
\end{lemma}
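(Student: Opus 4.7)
The plan is to derive the identity via a subordination formula relating the wave operator $e^{i\sqrt{\mathcal L}}$ to the Schr\"odinger group $\{e^{it\mathcal L}\}$, followed by a stationary phase analysis in the subordination parameter $s$. By the spectral theorem the claim reduces to the scalar identity
\begin{equation*}
\chi(\sqrt{\mu})\, e^{i\tau\sqrt{\mu}} \ = \ \chi_1(\mu) \sqrt{\tau} \int e^{i\tau\phi(s,\mu)} a_\tau(s)\, ds \ + \ \rho_\tau(\mu),
\end{equation*}
where $\mu$ plays the role of the rescaled spectral variable $\mathcal{L}/\tau^2$ and $\phi(s,\mu) = s\mu + 1/(4s)$. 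A direct computation shows that $\phi$ has a unique critical point $s_c(\mu) = 1/(2\sqrt{\mu})$, which stays in a fixed compact subset of $(0,\infty)$ as $\mu$ ranges over a neighborhood of $1$, with critical value $\phi(s_c(\mu),\mu) = \sqrt{\mu}$ and Hessian $\partial_s^2\phi(s_c(\mu),\mu) = 4\mu^{3/2} > 0$. This critical value matches precisely the exponent of the wave operator, which is why the subordination works.

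First I would construct $a_\tau$ as a truncated asymptotic expansion $a_\tau(s) = \psi(s)\sum_{k<N}\tau^{-k}a^{(k)}(s)$, where $\psi$ is a fixed smooth cutoff supported in a compact neighborhood of $\{s_c(\mu) : \mu \sim 1\} \subset (0,\infty)$. The method of stationary phase with large parameter $\tau$ then dictates the coefficients: the leading profile is prescribed by $a^{(0)}(s_c(\mu)) = \chi(\sqrt{\mu})\sqrt{2/\pi}\,\mu^{3/4} e^{-i\pi/4}$, and since $\mu \mapsto s_c(\mu)$ is a diffeomorphism onto the relevant $s$-interval, this determines $a^{(0)}$ as a smooth function of $s$ alone; analogous recursive prescriptions fix the subleading $a^{(k)}$. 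For $s \notin \operatorname{supp}\psi$ one has $|\partial_s\phi|\gtrsim 1$, so repeated integration by parts shows the non-stationary tail contributes $O(\tau^{-N})$ uniformly in $\mu$. Combining these ingredients, the right-hand side equals $\chi_1(\mu)\chi(\sqrt{\mu}) e^{i\tau\sqrt{\mu}}$ plus a residual of size $O(\tau^{-N})$; choosing $\chi_1 = 1$ on the support of $\chi(\sqrt{\cdot})$ matches the main term with the left-hand side.

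Next I would collect the residual terms --- the stationary phase remainder, the non-stationary tails, and any boundary mismatch with $\chi_1$ --- into the function $\rho_\tau(\mu)$. By construction $\rho_\tau$ is smooth, supported in a fixed compact subset of $(0,\infty)$, with $\|\rho_\tau\|_{C^k}\lesssim_{k,N} |\tau|^{-N}$ for every $k,N$. For the claimed operator norm bound I then invoke Hulanicki's theorem: since $\rho_\tau$ is a compactly supported smooth symbol whose seminorms are $O(|\tau|^{-N})$, the convolution kernel of $\rho_\tau(\tau^{-2}\mathcal{L})$ is a Schwartz function on $G$ (transported by the intrinsic dilations) with $L^1(G)$ norm $O(|\tau|^{-N})$, and Young's inequality yields the $L^p \to L^p$ bound of the same order.

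The main difficulty will be ensuring that the coefficient $a_\tau$ genuinely depends only on $s$, whereas stationary phase naively prescribes data as a function of $\mu$. The resolution is to read off the required values of each $a^{(k)}$ along the curve $s = s_c(\mu)$ and extend smoothly via $\psi$; because $\mu \mapsto s_c(\mu)$ is a diffeomorphism between neighborhoods of $1$ and $1/2$, this extension is unambiguous and produces profiles uniform in $\tau$ as required.
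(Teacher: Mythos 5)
Your proposal follows essentially the same route the paper sketches: reduce to the scalar identity via the spectral theorem (the paper's change of variables $x\mapsto x/\lambda^2$ in \eqref{stationary-phase}), identify the critical point $s_c=1/(2\sqrt{\mu})$ with critical value $\sqrt\mu$ and Hessian $4\mu^{3/2}$, read off the leading amplitude so that stationary phase reproduces $\chi(\sqrt\mu)e^{i\tau\sqrt\mu}$, and handle the remainder operator via Hulanicki. Your computations of the critical point, critical value, Hessian and the normalization $a^{(0)}(s_c(\mu))=\chi(\sqrt\mu)\sqrt{2/\pi}\,\mu^{3/4}e^{-i\pi/4}$ are all correct and match the paper's sketch. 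The observation that $\mu\mapsto s_c(\mu)$ is a diffeomorphism near $\mu\sim1$, so the recursively prescribed coefficients genuinely define functions of $s$ alone, is exactly the point the paper leaves implicit.

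One internal inconsistency to repair: you define $a_\tau(s)=\psi(s)\sum_{k<N}\tau^{-k}a^{(k)}(s)$ with a \emph{fixed} truncation level $N$, and then assert $\|\rho_\tau\|_{C^k}\lesssim_{k,N}|\tau|^{-N}$ ``for every $k,N$.'' A truncation at level $N$ can only yield the $O(\tau^{-N})$ bound for that particular $N$, not for all $N$ simultaneously; the lemma asks for a \emph{single} pair $(a_\tau,\rho_\tau)$ with $\|\rho_\tau(\tau^{-2}\mathcal L)\|_{L^p\to L^p}=O(|\tau|^{-N})$ for every $N$. To close this, either perform a Borel-type resummation $a_\tau(s)=\psi(s)\sum_{k\ge0}\eta(\tau/T_k)\tau^{-k}a^{(k)}(s)$ with $T_k\to\infty$ fast enough so that the sum converges with controlled seminorms, or observe (as M\"uller--Seeger do in their Proposition 4.1) that the amplitude arises from an explicit Fourier/Bessel representation of $e^{iy}$ which is exact up to a rapidly decaying remainder, bypassing the need for an asymptotic series. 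For the use made of the lemma in this paper (cf.\ \eqref{error-tau-2}), only $N=2$ is actually needed, so the truncated construction would suffice for the application even if it does not quite deliver the lemma as stated.

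A very minor point of bookkeeping: $\chi_1$ is data in the hypothesis, not something you get to choose; the phrase ``choosing $\chi_1=1$ on the support of $\chi(\sqrt\cdot)$'' should be read as verifying that the given hypothesis on $\chi_1$ (after the change of variables $\lambda\mapsto\lambda^2$ implicit in the lemma's wording) is exactly what makes the main terms agree. This does not affect the substance of your argument.
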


Hence by \eqref{error-tau-2}, the contribution corresponding to the term $\rho_{\tau}(\tau^{-2} {\mathcal L})$
can be easily treated, being uniformly summable in $j>0$. 

The formula \eqref{subordination-formula} is an immediate consequence of the spectral resolution \eqref{spectral-resolution}
for $\sqrt{\mathcal L}$ and the stationary phase formula
\begin{equation}\label{stationary-phase}
\chi(\sqrt{x}) e^{i \lambda\sqrt{x}} \ = \ \chi_1(x) \sqrt{\lambda}
\int e^{i\frac{\lambda}{4s}} a_{\lambda}(s) e^{i \lambda s x} ds \ + \ \rho_{\lambda}(x),
\end{equation}
together with the change of variables $x \to x/\lambda^2$. The phase $\Phi(s) = \lambda x s + \lambda/4s$
in the oscillatory integral in \eqref{stationary-phase}
has a unique nondegenerate critical point at $s_{*} = 1/2\sqrt{x}$ which is comparable to $1$ due to the support of $\chi$
and $\chi_1$. Also $\Phi(s_{*}) = \lambda \sqrt{x}$ and $\Phi''(s_{*}) = 4 \lambda x^{3/2}$. Hence the stationary
phase formula says
$$
\chi_1(x) \sqrt{\lambda} \int e^{i \Phi(s)} a_{\lambda}(s) \, ds \ = \ c \chi_1(x) 
\frac{ e^{i \Phi(s_{*})} }{\sqrt{\Phi''(s_{*})}}+ {\rm error} \ = \ {\tilde \chi}(\sqrt{x}) e^{i \lambda \sqrt{x}} + {\rm error}
$$
but of course a careful analysis of this principle with uniform control of the various terms, especially the error term,
is required to make this precise and useful. See \cite{MSeeger} for details.

Lemma \ref{subordination} allows us to understand the wave operator $e^{i\sqrt{\mathcal L}}$ via the
Schr\"odinger group $\{ e^{i t {\mathcal L}}\}$ where explicit formulae are well known. For instance
the convolution kernel $S_t$ for $e^{i t {\mathcal L}}$ is given by
\begin{equation}\label{Kt}
S_t(x,u) \ = \ \int_{{\mathbb R}^{d_2}} \Bigl( \frac{|\mu|}{2\sin(2\pi t |\mu|)}\Bigr)^{d_1/2} 
e^{-i |x|^2 \frac{\pi}{2} |\mu| \cot(2\pi t |\mu|)} e^{2\pi i u \cdot \mu} d\mu.
\end{equation}
From \eqref{subordination-formula}, we see that
$\chi(\tau^{-1} \sqrt{\mathcal L}) e^{i \sqrt{\mathcal L}}  =   \chi_1(\tau^{-2} {\mathcal L}) n_{\tau}(\mathcal L)
+ \rho_{\tau}(\tau^{-2} {\mathcal{L}})$
where
$$
n_{\tau}(\mathcal L) \ = \ 
\sqrt{\tau} \int_{\mathbb R}  e^{i\frac{\tau}{4 s}} a_{\tau}(s) e^{i s {\mathcal L}/\tau} \, ds
$$
and so by \eqref{Kt},
the convolution kernel of $n_{\tau}(\mathcal L)$ is given by the formula 
\begin{align*}
n_{\tau}(x,u) =
 \sqrt{\tau}
\int_{{\mathbb R}^{d_2}} \int_{\mathbb R}  e^{i \frac{\tau}{4s}} a_{\tau}(s) 
\Bigl( \frac{|\mu|}{2\sin(2\pi s |\mu|/\tau)}\Bigr)^{d_1/2} \\ \times 
e^{-i |x|^2 \frac{\pi}{2} |\mu| \cot(2\pi s |\mu|/\tau)} e^{2\pi i u \cdot \mu} ds d\mu.
\end{align*}
As the phase above exhibits periodic singularities (where $\sin(2\pi s |\mu|/\tau) = 0$), 
it is natural to introduce an equally spaced decomposition
in the central Fourier variables $|\mu|$: that is, with respect to the spectrum of the operator $|U|$ (recall that
$U := ( - i U_1, \ldots, - i U_{d_2})$). 

We fix an $\eta_0 \in C^{\infty}_c({\mathbb R})$, supported in a small neighborhood of the origin and such
that $\sum_{k \in {\mathbb Z}} \eta_0(t - k \pi) \equiv 1$. We decompose
$n_{\tau}(\mathcal L) = n_{\tau}^0({\mathcal L}, |U|) + n_{\tau}^1({\mathcal L}, |U|)$ where 
$$
 n_{\tau}^0({\mathcal L}, |U|) \ = \ 
\sqrt{\tau} \int_{\mathbb R}  e^{i\frac{\tau}{4 s}} a_{\tau}(s) \eta_0( \frac{s}{\tau} |U|) e^{i s {\mathcal L}/\tau} \, ds
$$
and $n_{\tau}^1({\mathcal L}, |U|) = \sum_{k\ge 1} n_{\tau, k}({\mathcal L}, |U|)$ where
$$
n_{\tau,k}({\mathcal L}, |U|) \ = \ 
\sqrt{\tau} \int_{\mathbb R}  e^{i\frac{\tau}{4 s}} a_{\tau}(s) \eta_0( \frac{s}{\tau} |U| - k \pi) e^{i s {\mathcal L}/\tau} \, ds.
$$
The refined $L^1$ estimates established in \cite{MSeeger} which prove \eqref{L1} in Theorem \ref{MS-basic} are the following.

\begin{proposition}\label{n0-1} We have
\begin{equation}\label{0}
\|n_{\tau}^0({\mathcal L}, |U|)\|_{L^1 \to L^1} \ \lesssim \ (1+ |\tau|)^{(d - 1)/2}
\end{equation}
and for each $k\ge 1$,
\begin{equation}\label{k}
\|n_{\tau, k}({\mathcal L}, |U|)\|_{L^1 \to L^1} \ \lesssim \ k^{-(d_1 + 1)/2} \, (1+ |\tau|)^{(d - 1)/2}.
\end{equation}
\end{proposition}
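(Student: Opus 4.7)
The plan is to insert the explicit kernel formula \eqref{Kt} for $e^{is\mathcal{L}/\tau}$ into the definitions of $n_\tau^0$ and $n_{\tau,k}$, then analyze the resulting oscillatory integrals via stationary phase in the $s$ variable (with $\tau$ as the large parameter), exploiting the fact that the cutoff $\eta_0(\frac{s}{\tau}|U|-k\pi)$ isolates a region where the otherwise singular factor $(|\mu|/\sin(2\pi s|\mu|/\tau))^{d_1/2}$ can be controlled. The $L^1$ norm of the convolution kernel is then estimated by integrating absolute values in $(x,u)$, with the $\mu$-integral handled separately on each spectral band.

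First I would write
\[
n_\tau^\bullet(x,u) \;=\; \sqrt{\tau}\int_{\mathbb{R}^{d_2}}\!\int_{\mathbb{R}} e^{i\Phi_\tau(s,\mu;x,u)}\,A_\tau^\bullet(s,\mu)\,ds\,d\mu,
\]
with phase $\Phi_\tau(s,\mu;x,u)=\tau/(4s)-\frac{\pi}{2}|x|^2|\mu|\cot(2\pi s|\mu|/\tau)+2\pi u\cdot\mu$ and amplitude $A_\tau^\bullet$ that absorbs $a_\tau(s)$, the factor $(|\mu|/(2\sin(2\pi s|\mu|/\tau)))^{d_1/2}$ and the cutoff $\eta_0(\frac{s|\mu|}{\tau})$ or $\eta_0(\frac{s|\mu|}{\tau}-k\pi)$. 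For $n_\tau^0$, the cutoff forces $s|\mu|/\tau$ to stay near the origin where $\cot$ behaves like $1/(2\pi s|\mu|/\tau)$ and $1/\sin$ is regular, so $A_\tau^0$ is a smooth symbol; stationary phase in $s$ (the critical point comes from $\partial_s\Phi_\tau=0$, which is non-degenerate thanks to the $\tau/(4s)$ term) yields the expected $\sqrt{\tau}$ gain and reduces the estimate to a $\mu$-integral that I would bound spatially, producing the growth $(1+|\tau|)^{(d-1)/2}$.

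For the pieces $n_{\tau,k}$ with $k\ge 1$, I would change variables $\sigma=s|\mu|/\tau-k\pi$ so that the singular denominator becomes $\sin(2\pi\sigma)$, which is regular on the support of $\eta_0$; this forces $|\mu|\sim k\tau/s$, so the amplitude inherits a factor $(k\tau)^{d_1/2}$ from $|\mu|^{d_1/2}$. The key observation is that the spectral localization $\eta_0(\frac{s}{\tau}|U|-k\pi)$ projects onto a narrow band of the central variable that corresponds to the $k$-th eigenspace of a twisted Hermite operator in the $x$ variables; this projection, after Fourier inversion in $\mu$, produces a kernel in $x$ of Mehler/Laguerre type whose $L^1$ norm decays polynomially in $k$. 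Combining this with the stationary phase gain in $s$ should give precisely the bound \eqref{k}, and plugging into \eqref{subordination-formula} and summing the geometric series $\sum_k k^{-(d_1+1)/2}$ against $(1+|\tau|)^{(d-1)/2}$ recovers Theorem \ref{MS-basic}.

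The hard part, and the one I would expect to consume the bulk of the work, is tracking the exact power $k^{-(d_1+1)/2}$: this exponent arises as $-d_1/2$ from a Bessel-type decay of the Fourier transform of the Hermite projection kernel on $\mathbb{R}^{d_1}$ plus an additional $-1/2$ gain coming from the localized stationary phase in $s$ on the $k$-th band. Making this precise, uniformly in both $\tau$ and $k$, requires careful control of the remainder in the stationary phase expansion when $\sigma$ is small (so that $\cot(2\pi\sigma)$ becomes large) and a delicate interpolation between the regimes $k\lesssim \tau$ and $k\gg \tau$. For this step I would rely on the detailed analysis in \cite{MSeeger}, whose wave packet decomposition and oscillatory integral estimates are exactly tailored to produce these sharp bounds.
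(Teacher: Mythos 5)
The paper does not give a proof of this proposition: it is stated and cited directly from \cite{MSeeger} (note the sentence immediately preceding it, ``The refined $L^1$ estimates established in \cite{MSeeger} \dots are the following''). So there is no in-paper argument for comparison, and your sketch should be read as a summary of what would go into the M\"uller--Seeger proof. Your deferral to \cite{MSeeger} at the end is therefore the right move and matches what the paper itself does.

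That said, two of your explanatory heuristics misrepresent the mechanism. First, the cutoff $\eta_0\bigl(\tfrac{s}{\tau}|U|-k\pi\bigr)$ with $k\ge 1$ localizes \emph{onto} the $k$-th singularity of the $\sin$ and $\cot$ factors, not away from it; after your substitution $\sigma = s|\mu|/\tau - k\pi$ the denominator becomes $\approx \pm\sin\sigma$, which vanishes at $\sigma = 0$, i.e.\ in the interior of the support of $\eta_0$. It is not ``regular'' there. Only the $k=0$ piece $n_\tau^0$ has a genuinely bounded amplitude, because the $|\mu|^{d_1/2}$ in the numerator cancels the vanishing of $\sin(s|\mu|/\tau)^{d_1/2}$ near the origin, producing a factor $\sim (\tau/s)^{d_1/2}$ independent of $\mu$. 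For $k\ge 1$ the singularity survives and \eqref{k} requires resolving it, not avoiding it.

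Second, $\eta_0\bigl(\tfrac{s}{\tau}|U|-k\pi\bigr)$ is a spectral cut in the central variable $|U|$ (equivalently $|\mu|$), and this is orthogonal to the twisted Hermite/Laguerre level. For fixed $|\mu|$ the eigenvalues of $\mathcal L$ are $(2n+d_1/2)|\mu|$ with $n\ge 0$, and it is $n$ that indexes the Hermite eigenspaces; the cutoff does not select a single $n$. The index $k$ labels the periodic poles of the Mehler/metaplectic kernel, which correspond to the distinct ``sheets'' of the singular support of the wave propagator. Your proposed splitting $k^{-(d_1+1)/2} = k^{-d_1/2}\cdot k^{-1/2}$ as Bessel decay of a Hermite projection kernel plus a stationary-phase half-gain does not reflect the actual structure: the decay in $k$ comes from the geometry of the $k$-th singular sheet (its shrinking thickness and the competing growth $|\mu|\sim k\tau$ in the amplitude), and establishing the precise exponent is exactly the technical core of the M\"uller--Seeger analysis that one cannot shortcut with this heuristic.
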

Since $d_1 \ge 2$, the sum over $k\ge 1$ converges and so Theorem \ref{MS-basic} is an immediate consequence
of Proposition \ref{n0-1}.

We denote by ${\mathcal V}_{\tau}$ the operator
$\chi_1(\tau^{-2} {\mathcal L}) n_{\tau}^0({\mathcal L}, |U|)$
and by ${\mathcal W}_{\tau}$ the operator \\
$\chi_1(\tau^{-2} {\mathcal L}) n_{\tau}^1({\mathcal L}, |U|)$ so that
\begin{equation}\label{V-W}
\chi(\tau^{-1} \sqrt{\mathcal L}) e^{i \sqrt{\mathcal L}} \ = \ {\mathcal V}_{\tau} \ + \ {\mathcal W}_{\tau} \ + \
E_{\tau}
\end{equation}
where the operator $E_{\tau}$ is negligible; the $L^1$ operator norm is $O(|\tau|^{-N})$ for any $N\ge 1$ 
and so by \eqref{error-tau-2}
gives rise to a bound of the form $2^{-j\theta d/2}$. 

Finally we need to decompose ${\mathcal W}_{\tau} = \sum_{n \ge 0} {\mathcal W}_{\tau, n}$ which will
localize $|U|/\tau$ (or the central Fourier variables $|u|/\tau$).  Fix two smooth
$\zeta_0, \zeta_1$ with $\zeta_0$ supported in $(-1,1)$ and $\zeta_1$ supported in $\pm (1/2, 2)$ so that
$\zeta_0(t) + \sum\limits_{n\ge 1} \zeta_1( 2^{-n} t) \equiv 1$. Define
$$
{\mathcal W}_{\tau, 0} := \zeta_0(\tau^{-1} |U|) {\mathcal W}_{\tau} \ \ {\rm and \ for} \ n\ge1, \ \ 
{\mathcal W}_{\tau, n} := \zeta_1(\tau^{-1} 2^{-n} |U|) {\mathcal W}_{\tau}
$$
so that ${\mathcal W}_{\tau} = \sum_{n\ge 0} {\mathcal W}_{\tau, n}$ holds. This finer decomposition
will allow us to identify when we will be able to implement the argument outlined at the end of previous section
which uses the cancellation of the atom $a_B$.

\section{The final step in the proof of Theorem \ref{hardy-iso}}\label{final-step}

Recall that the last step in the proof of Theorem \ref{hardy-iso} is to uniformly bound
\begin{equation}\label{last-step}
I_{-} \ := \ \sum_{j>0: j+L \le 0} \int_{\|(x,u)\|_E \gg 2^L} |m_j(\sqrt{\mathcal L}) a_B (x,u)| dx du \ =: \
\sum_{j+L\le 0} I_j .
\end{equation}

From the previous section, we see that the
operator $\chi(2^{-j}\sqrt{\mathcal L}) e^{i 2^{-j} \tau \sqrt{\mathcal L}}$ appearing in the formula
\eqref{mj-formula} 
$$
m_j(\sqrt{\mathcal L}) \ = \ \int {\widehat{m}^j}(\tau) \chi(2^{-j} \sqrt{\mathcal L}) 
e^{i 2^{-j} \tau \sqrt{\mathcal  L}} \, d\tau 
$$
can be written as $({\mathcal V}_{\tau})_g + ({\mathcal W}_{\tau})_g + (E_{\tau})_g$ where
the $L^1$ operator norm of $(E_{\tau})_g$  is uniformly $O(|\tau|^{-N})$ for any $N\ge 1$, 
$g = 2^j/\tau$ and the convolution kernel of $({\mathcal W}_{\tau})_g$, say,  is given by the $L^1$ invariant dilate
$({\mathcal W}_{\tau})_g (x,u) = g^Q {\mathcal W}_{\tau} (\delta_g (x,u))$.

From \eqref{error-tau-2}, we have the bound $I_j \le I_j^1 + I_j^2 + O(2^{-j\theta d/2})$
where
\begin{equation}\label{Ij1}
I_j^1 =  \int_{\|(x,u)\|_E \gg 2^L}  \Bigl| \int_{|\tau| \gg 1} {\widehat m}^j(\tau) ({\mathcal V}_{\tau})_g a_B(x,u) d\tau \Bigr|
dx du 
\end{equation}
and
$$
I_j^2 =  \int_{\|(x,u)\|_E \gg 2^L}  \Bigl| \int_{|\tau| \gg 1} {\widehat m}^j(\tau) ({\mathcal W}_{\tau})_g  a_B(x,u) d\tau \Bigr|
dx du .
$$
We first treat $I_j^2$, noting
$$
I_j^2 \le  \int_{|\tau|\gg 1}  |{\widehat{m}^j}(\tau)| \| ({\mathcal W}_{\tau})_g a_B \|_{L^1(G)} \, d\tau
\le  \sum_{n\ge 0} \int_{|\tau|\gg 1}  |{\widehat{m}^j}(\tau)| \| ({\mathcal W}_{\tau,n})_g a_B \|_{L^1(G)} \, d\tau
$$
where we employed the decomposition ${\mathcal W}_{\tau} = \sum_{n\ge 0} {\mathcal W}_{\tau,n}$ from the
end of the previous section. 

From \eqref{k} in Proposition \ref{n0-1}, we see that $\|n_{\tau}^1\|_{L^1 \to L^1} \lesssim (1+|\tau|)^{(d-1)/2}$
and so $\|{\mathcal W}_{\tau,0}\|_{L^1 \to L^1} \lesssim (1+|\tau|)^{(d-1)/2}$. Also, we can write
$$
{\mathcal W}_{\tau, n} = \zeta_1(\tau^{-1} 2^{-n} |U|) {\mathcal W}_{\tau} \ = \ 
\sum_{k \ge n + C} \zeta_1(\tau^{-1} 2^{-n} |U|) n_{\tau, k},
$$
where $C$ depends only on support properties of $\eta_0$ and $a_\tau$, thus the estimate $\|{\mathcal W}_{\tau, n} \|_{L^1 \to L^1} \lesssim 2^{-n (d_1 - 1)/2} (1+|\tau|)^{(d-1)/2}$
follows again from \eqref{k}. Hence 
\begin{equation}\label{W-L1}
 \|{\mathcal W}_{\tau, n} \|_{L^1 \to L^1} \ \lesssim \ 2^{-n (d_1 - 1)/2} (1+|\tau|)^{(d-1)/2}
\end{equation}
holds for all $n\ge 0$. 

We now split the sum over $n \ge 0$; for large $n$, we will use the exponential decay in $n$ in the
estimate \eqref{W-L1} and here we will not need the cancellation of the atom $a_B$. For small $n$,
we will use the cancellation of the atom but the incompatibility of the two geometries will diminish
the favorable factor $2^{j+L}$ coming from the difference with an exponential
growth in $n$ which nonetheless will be admissible if $n$ is small enough.

Recall that when $j\in \mathcal{A}_{-}$, we have $2^{j+L}\le 1$ and we split the sum in $n$, writing $I_j^2\le I_{j,1}^2 + I_{j,2}^2$ where
$$
I_{j,1}^2 \  := \ \sum_{n\in {\mathcal N}_1}  \int_{|\tau|\gg 1}  |{\widehat{m}^j}(\tau)|
\|({\mathcal W}_{\tau,n})_g a_B \|_{L^1(G)} \, d\tau
$$
and ${\mathcal N}_1 = \{n\ge 0 : 2^{-\epsilon(j+L)} \le 2^n \}$ for some fixed small $\epsilon > 0$. 
We define $I_{j,2}^2$ similarly
where now the sum over $n$ lies in ${\mathcal N}_2 = \{ n\ge 0: 2^n \le 2^{-\epsilon (j+L)} \}$. 

For $I_{j,1}^2$ we use the bound in \eqref{W-L1} to conclude that 
$$
I_{j,1}^2 \ \lesssim \ \sum_{n\in {\mathcal N}_1} 2^{- n (d_1 -1)/2} \int  |{\widehat{m}^j}(\tau)| |\tau|^{(d-1)/2} \, d\tau \ \lesssim \
2^{\delta (j+L)} \int |{\widehat{m}^j}(\tau)| |\tau|^{(d-1)/2} \, d\tau
$$
for some $\delta > 0$. But
$$
 \int |{\widehat{m}^j}(\tau)| |\tau|^{(d-1)/2} \, d\tau \ = \ \int_{|\tau| \le 2^{j \theta}} 
 |{\widehat{m}^j}(\tau)| |\tau|^{(d-1)/2} \, d\tau + \int_{|\tau| \ge 2^{j \theta}} 
 |{\widehat{m}^j}(\tau)| |\tau|^{(d-1)/2} \, d\tau 
$$
and the first integral is uniformly bounded in $j$ by an application of Cauchy-Schwarz and our $L^{\infty}$ condition on 
${m}^j$.
The second integral is also uniformly bounded in $j$ by another application of Cauchy-Schwarz and our $L^2$ Sobolev condition
on ${m}^j$. Hence $I_{j,1}^2 \lesssim 2^{\delta (j+L)}$ which is uniformly summable over $j$ with $j + L \le 0$. 

To treat $I_{j,2}^2$, we make the understanding that $\zeta = \zeta_0$ when $n=0$ and $\zeta = \zeta_1$ when
$n\ge 1$. Hence
$$
({\mathcal W}_{\tau, n})_g ({\mathcal L}) = n_{\tau}^1(\tau^2 2^{-2j} {\mathcal L},\tau 2^{-j} |U|) 
\chi_1(2^{-2j} {\mathcal L}) \zeta(2^{-j-n} |U|)
$$ 
and so $({\mathcal W}_{\tau, n})_g ({\mathcal L}) a_B (x,u) = ({\mathcal W}_{\tau, n})_g ({\mathcal L}) (a_B * H_{j,n})(x,u)$ where $H_{j,n}$ is defined as
$$
f * H_{j,n}  \ := \ \chi_1'(2^{-2j}{\mathcal L}) \zeta'(2^{-j-n} |U|) f,
$$
where $\chi_1'$ and $\zeta'$ are smooth cut-off functions which are identically equal to 1 on the
supports of $\chi_1$ and $\zeta$, respectively. Hence
$\chi_1 = \chi_1 \chi_1'$ and $\zeta = \zeta \zeta'$.

Therefore
$$
I_{j,2}^2 \ \le \ \sum_{n\in {\mathcal N}_2}  \, 
\int_{|\tau|\gg 1}  |{\widehat{m}^j}(\tau)| \|{\mathcal W}_{\tau,n}\|_{L^1 \to L^1}
\|a_B * H_{j,n}\|_{L^1(G)} \, d\tau.
$$
We note that $H_{j,n}$ is a $\delta_{2^j}$ dilate of the convolution
of the Schwartz function $h_1$ (coming from $\chi_1^{\prime}(\mathcal L)$) and the $2^n$ dilate
of the convolution kernel of $\zeta'(|U|)$ which has the form $\delta \otimes h_2$ where
$\delta$ is the Dirac measure on ${\mathbb R}^{d_1}$ and $h_2$ is a Schwartz function on ${\mathbb R}^{d_2}$.
Hence
\begin{equation}\label{Hjn}
H_{j,n}(x,u) = \int 2^{j(d_1 + 2d_2)} h_1(2^j x, 2^{2j} w) 2^{d_1(j+n)} h_2(2^j(u - w)) \, dw  
\end{equation}
and in particular, we see that $\|H_{j,n}\|_{L^1} \lesssim 1$.

We now
use the cancellation of the atom $a_B$. We have
\begin{align*}
a_B &* H_{j,n} (x,u) \ = \ \int a_B(y,v) \bigl[ H_{j,n}((y,v)^{-1}\cdot (x,u)) - H_{j,n}(x,u) \bigr] dy dv
\\
&= - \int a_B(y,v) \Bigl( \int_0^1 \bigl\langle y, \nabla_x H_{j,n}(x - s y, u - sv + \frac{1}{2} \langle J x, y \bigr\rangle ) \rangle \ \ \ \ \ \ \ \ 
\\
& +  \bigl\langle v + \frac{1}{2} \langle J x, y \rangle, \nabla_u H_{j,n}(x - s y, u - sv + \frac{1}{2} 
\langle J x, y \rangle ) \bigr\rangle
\, ds \Bigr) dy dv.
\end{align*}
Using $\langle Jx, y\rangle = \langle J(x-sy), y\rangle$, we see that
$$
\|a_B * H_{j,n}\|_{L^1(G)} \lesssim 2^L \bigl[ \|\nabla_x H_{j,n}\|_{L^1(G)} + \|\nabla_u H_{j,n}\|_{L^1(G)} +
\| |x| \nabla_u H_{j,n}\|_{L^1(G)} \bigr]
$$
and so \eqref{Hjn} implies $\|a_B * H_{j,n}\|_{L^1(G)} \lesssim 2^n 2^{j+L}$.

We should mention that the bound $\|a_B * H_{j,n}\|_{L^1} \lesssim \min(1, 2^n 2^{j+L})$
was used in \cite{MSeeger} (see Lemma 9.1 there). We have reproduced a sketch the proof here for the
convenience of the reader. 

The bound $\|a_B * H_{j,n}\|_{L^1(G)} \lesssim 2^n 2^{j+L}$, together with 
$\|{\mathcal W}_{\tau,n}\|_{L^1 \to L^1} \lesssim  |\tau|^{(d-1)/2}$ shows
$$
I_{j,2}^2 \ \lesssim \ \sum_{n\in {\mathcal N}_2} 2^n 2^{j+L} \int  |{\widehat{m}^j}(\tau)| |\tau|^{(d-1)/2} \, d\tau \ \lesssim \
2^{\delta (j+L)} \int |{\widehat{m}^j}(\tau)| |\tau|^{(d-1)/2} \, d\tau
$$
for some $\delta > 0$. Since we have seen that the above integral is uniformly bounded in $j$, we see that
\begin{equation}\label{Ij2}
|I_j^2| \ \lesssim \ 2^{\delta(j+L)} \ \ {\rm for \ some} \ \delta > 0
\end{equation}
which is uniformly summable over $j$ with $j + L \le 0$.
This gives a proof that $\sum_{j: j+L\le 0} I_j^2 \lesssim 1$.

It remains to treat the terms $I_j^1$ in \eqref{Ij1}. Recall that 
${\mathcal V}_{\tau} = \chi_1(\tau^{-2} {\mathcal L}) n_{\tau}^0({\mathcal L}, |U|)$ and the
convolution kernel of $ n_{\tau}^0({\mathcal L}, |U|)$ is given by the formula
\begin{align*}
K_{\tau}^0(x,u)= \sqrt{\tau}
\int_{{\mathbb R}^{d_2}} \int_{\mathbb R}  e^{i \frac{\tau}{4s}} a_{\tau}(s) 
\eta_0(s |\mu|/ \tau)
\Bigl( \frac{|\mu|}{2\sin(2\pi s |\mu|/\tau)}\Bigr)^{d_1/2} \\ \times
e^{-i |x|^2 \frac{\pi}{2} |\mu| \cot(2\pi s |\mu|/\tau)} e^{2\pi i u \cdot \mu} ds d\mu.
\end{align*}
A straightforward analysis of the phase in the above oscillatory integral representation of the
kernel $K_{\tau}^0$ shows a couple of regions in $(x,u)$ space where the phase is nondegenerate 
and an integration by parts argument shows the following rapid decay estimates (see \cite{MSeeger}, Lemma 6.2
for details).

\begin{proposition}\label{K0} For every $N\ge 1$, we have
$$
|K_{\tau}^0(x,u)| \ \lesssim_N \ (1+|\tau|)^{(Q+1)/2 - N} (|x|^2 + |u|)^{-N}, \ \ {\rm when} \ |x|^2 + |u| \ge 2
$$
and
$$
|K_{\tau}^0(x,u)| \ \lesssim_N \ (1+|\tau|)^{(Q+1)/2 - N} (1 + |u|)^{-N}, \ \ {\rm when} \ |x|^2 \le  1/20.
$$
\end{proposition}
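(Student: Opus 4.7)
The plan is to treat the integral representation of $K_\tau^0$ as an oscillatory integral with $\tau$ as the large parameter, and extract rapid decay by non-stationary phase / integration by parts in each of the two regions where the phase is demonstrably non-degenerate. First I would rescale $\mu = \tau \nu$ to bring the kernel into the form
\[
K_\tau^0(x,u) \ = \ \tau^{(Q+1)/2} \int_{\mathbb{R}^{d_2}} \int_{\mathbb{R}} e^{i\tau \Psi(s,\nu;\,x,u)}\, B(s,\nu)\, ds\, d\nu,
\]
with phase $\Psi = (4s)^{-1}\bigl[1 - |x|^2\, h(2\pi s|\nu|)\bigr] + 2\pi u\cdot\nu$, where $h(\sigma) = \sigma\cot\sigma = 1 - \sigma^2/3 + O(\sigma^4)$, and with a smooth amplitude $B$ supported in $s \sim 1$ and $\sigma := 2\pi s|\nu|$ in a small neighbourhood of $0$. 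The prefactor $\tau^{(Q+1)/2}$ accumulates from the Jacobian $\tau^{d_2}$, the rescaled amplitude factor $\tau^{d_1/2}$, and the $\sqrt\tau$ coming from the subordination formula.

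A direct Taylor expansion of $h$ gives the key expressions
\[
\partial_s \Psi \ = \ \frac{|x|^2 - 1}{4s^2} \ + \ |x|^2\, r_1(s,\nu), \qquad
\partial_\nu \Psi \ = \ 2\pi u \ + \ |x|^2\, r_2(s,\nu)\,\nu,
\]
with smooth remainders $r_1 = O(\sigma^2)$ and $r_2 = O(1)$, uniformly bounded on the support of $B$. In the first regime $|x|^2 + |u| \ge 2$, I would split into two overlapping subregions according to which of $|x|^2, |u|$ is dominant. If $|x|^2 \gtrsim 1 + |u|$, then (choosing the support of $\eta_0$ narrow enough, which we may) $|\partial_s \Psi| \gtrsim |x|^2 \gtrsim |x|^2 + |u|$; if $|u| \gtrsim |x|^2$, the $u$-term dominates $\partial_\nu \Psi$, giving $|\partial_\nu \Psi| \gtrsim |u| \gtrsim |x|^2 + |u|$. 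The two subregions cover $\{|x|^2 + |u|\ge 2\}$, and applying $N$ integrations by parts using either $L_s = (i\tau \partial_s \Psi)^{-1} \partial_s$ or the analogous operator $L_\nu$ in the appropriate subregion produces the stated bound $\tau^{(Q+1)/2 - N}(|x|^2 + |u|)^{-N}$.

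In the second regime $|x|^2 \le 1/20$, the factor $1 - |x|^2 h(\sigma)$ is uniformly bounded below on the support of $B$ (after possibly shrinking $\mathrm{supp}(\eta_0)$ so that $h(\sigma)$ stays close to $1$), so $|\partial_s \Psi| \gtrsim 1$ uniformly and $N$-fold iteration of $L_s$ delivers $\tau^{(Q+1)/2 - N}$. To upgrade this to $(1+|u|)^{-N}$, I would note that if $|u| \le 1$ there is nothing more to prove, while if $|u| \ge 1$ the smallness of $|x|^2$ forces $|\partial_\nu \Psi| \ge 2\pi|u| - C|x|^2 \gtrsim |u|$; then $N$-fold application of $L_\nu$ (in place of $L_s$) produces $\tau^{(Q+1)/2 - N}|u|^{-N}$. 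Combining the two estimates yields $\tau^{(Q+1)/2 - N}(1+|u|)^{-N}$ in either range of $|u|$.

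The main obstacle will be the bookkeeping of the derivatives of the amplitude generated by the iterated IBP. One has to verify that each iterated amplitude, which involves products of derivatives of $B$, of $(\partial_s \Psi)^{-1}$, and of $\nabla_\nu \Psi/|\nabla_\nu \Psi|^2$, remains uniformly bounded on the compact support in $(s,\nu)$. This is routine but requires checking that the chosen lower bounds on $|\partial_s \Psi|$ and $|\partial_\nu \Psi|$ are stable under differentiation in $s$ and $\nu$, which follows because $h(\sigma)$ and $(|\nu|/\sin\sigma)^{d_1/2}$ are real-analytic on $\mathrm{supp}(\eta_0)$, and because the splitting into subregions in the first regime (and the smallness condition on $|x|^2$ in the second) was chosen precisely so that the leading terms dominate the corrections stably.
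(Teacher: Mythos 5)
Your proposal is correct, and it is essentially the argument that the paper outsources to M\"uller--Seeger (Lemma~6.2 of \cite{MSeeger}): the paper itself gives no proof, only the citation. Rescaling $\mu = \tau\nu$ to extract the $\tau^{(Q+1)/2}$ prefactor and leave a phase $\tau\Psi(s,\nu;x,u)$, then using that $\partial_s\Psi$ has leading term $(|x|^2-1)/4s^2$ and $\nabla_\nu\Psi$ has leading term $2\pi u$, is exactly the right computation, and your case split (IBP in $s$ when $|x|^2$ dominates, IBP in $\nu$ when $|u|$ dominates, with the $\eta_0$-support shrunk so the error terms stay subordinate) gives the two claimed regimes with the stated exponents. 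The only housekeeping to add to make the integration by parts airtight near $\nu=0$ is the observation that $\eta_0\equiv 1$ in a neighbourhood of the origin (forced by $\sum_k\eta_0(\cdot-k\pi)\equiv 1$), that $(\,|\nu|/\sin(2\pi s|\nu|)\,)^{d_1/2}$ is a smooth function of $|\nu|^2$, and that $h'(2\pi s|\nu|)\,\nu/|\nu| = O(\nu)$ is smooth, so the rescaled amplitude and the IBP weight $\nabla_\nu\Psi/|\nabla_\nu\Psi|^2$ are genuinely smooth on the compact support and all boundary terms vanish --- all of which you flagged as ``routine'' and which indeed is.
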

 
Now let $\phi \in C^{\infty}_c({\mathbb R}^{d_1 + d_2})$ be such that $\phi(x,u) = 1$ when $|x|^2 + |u| \le 2$
and $\phi(x,u) = 0$ when $|x|^2 + |u| \ge 3$. Also fix $\psi \in C^{\infty}_c({\mathbb R}^{d_1})$ such that
$\psi(x) = 1$ when $|x|^2 \le 1/40$ and $\psi(x) = 0$ when $|x|^2 \ge 1/20$. Decompose 
$K_{\tau}^0 = K_{\tau}^{0,1} + K_{\tau}^{0,2}$ where
$$
K_{\tau}^{0,1}(x,u) \ := \ K_{\tau}^0(x,u) \phi(x,u) (1 - \psi(x))  
$$
so that by Proposition \ref{K0}, $\|K_{\tau}^{0,2}\|_{L^1} \lesssim |\tau|^{-N}$ for any $N\ge 1$. Furthermore
from Hulanicki's result in \cite{h},
$$
\|{\mathcal V}_{\tau} - n_{\tau}^0\|_{L^1 \to L^1} \ = \ \|\chi_1(\tau^{-2} {\mathcal L}) n_{\tau}^0 - n_{\tau}^0\|_{L^1\to L^1}
\ \lesssim \ (1+|\tau|)^{-N}
$$
for every $N\ge 1$ (see Lemma 6.1 in \cite{MSeeger}). Hence if ${\mathcal K}_{\tau}^{0,1}$ denotes
the operator of convolution with $K_{\tau}^{0,1}$, then $\|{\mathcal V}_{\tau} - {\mathcal K}_{\tau}^{0,1}\|_{L^1 \to L^1}
\lesssim (1+|\tau|)^{-N}$ and so if
$$
II_j^1 \ := \ \int_{\|(x,u)\|_E \gg 2^L}  \Bigl| \int_{|\tau| \gg 1} {\widehat m}^j(\tau) a_B * (K_{\tau}^{0,1})_g (x,u) d\tau \Bigr|
dx du,
$$
then
\begin{equation}\label{Ij-IIj}
I_j^1 \lesssim  II_j^1 \ +  \ \int_{|\tau|\gg 1} |{\widehat m}^j(\tau)| \| ({\mathcal V}_{\tau} - {\mathcal K}_{\tau}^{0,1})_g
 \|_{L^1 \to L^1} d\tau \ = \ II_j^1 +
O(2^{-j\theta d/2}).
\end{equation}
For a fixed $\tau$ and $(x,u)$ let us examine the convolution 
\begin{align*}
a_B * (K_{\tau}^{0,1})_g (x,u) = \int  g^Q K_{\tau}^0( \delta_g ((y,v)^{-1}\cdot (x,u))) \phi(\delta_g ((y,v)^{-1}\cdot (x,u)))\\ \times(1 - \psi(2^j/\tau(x -y)) \,
a_B(y,v) dy dv.
\end{align*}
Note that $|x-y| \sim |\tau|/2^j$ but since $2^{j+L} \le 1 \ll |\tau|$, we have $|y| \le 2^L \ll |\tau|/2^j$ and so
$|x| \sim |\tau|/2^j$ or $|\tau| \sim 2^j |x|$. Therefore
$$
II_j^1 \ = \ \int_{\|(x,u)\|_E \gg 2^L}  \Bigl| \int\limits_{|\tau| \gg 1, \ |\tau| \sim 2^j |x|}
         {\widehat m}^j(\tau) a_B * (K_{\tau}^{0,1})_g (x,u) d\tau \Bigr| dx du.
$$

Now we work backwards, recalling that the operator decomposition $n_{\tau}^0 = {\mathcal K}_{\tau}^{0,1} + 
{\mathcal K}_0^{0,2}$ so that 
$$ 
{\mathcal V}_{\tau} = {\mathcal K}_{\tau}^{0,1} +  [{\mathcal V}_{\tau} - n_{\tau}^0] + {\mathcal K}_{\tau}^{0,2}.
$$
Furthermore \[\chi(\tau^{-1} \sqrt{\mathcal L}) e^{i \sqrt{\mathcal L}} =
 {\mathcal V}_{\tau} \ + \ {\mathcal W}_{\tau} \ + \
E_{\tau} =  {\mathcal K}_{\tau}^{0,1} \ + \   {\mathcal W}_{\tau} \ + \
 [{\mathcal V}_{\tau} - n_{\tau}^0] + {\mathcal K}_{\tau}^{0,2} \ + \
E_{\tau}.
\]
The terms $ [{\mathcal V}_{\tau} - n_{\tau}^0], {\mathcal K}_{\tau}^{0,2}$ and 
$E_{\tau}$ are negligible, each with an $L^1$ operator norm of  $O(|\tau|^{-2})$ which gives rise to the admissible 
bound $2^{-j\theta d/2}$ by \eqref{error-tau-2}. Hence,
using the estimates in \eqref{Ij2}, we see that
$$
|II_j^1| \ \lesssim \  \int_{\|(x,u)\|_E \gg 2^L}  \Bigl| \int_{|\tau| \sim 2^j |x|} 
{\widehat m}^j(\tau) a_B * (K_{\tau})_g (x,u) d\tau \Bigr| dx du \ + \ 2^{\delta(j+L)} \ + \ 2^{-j\theta d/2}
$$
where $K_{\tau}$ denotes (as before) the convolution kernel of $\chi(\tau^{-1} \sqrt{\mathcal L}) e^{i \sqrt{\mathcal L}}$.

Therefore, setting
$$
III_j^1 \ := \ \int_{\|(x,u)\|_E \gg 2^L}  \Bigl| \int\limits_{|\tau|\gg 1, |\tau| \sim 2^j |x|} 
{\widehat m}^j(\tau) a_B * (K_{\tau})_g (x,u) d\tau \Bigr| dx du,
$$
matters are reduced to showing $\sum_{j+L\le 0} III_j^1 \lesssim 1$. We have $III_j^1 \le $
$$
\int_{|\tau|\gg 1} |{\widehat m}^j(\tau)| \int |a_B(y,v)| \Bigl[ \int_{|x| \sim |\tau|/2^j}
g^Q | K_{\tau}(\delta_g ( (y,v)^{-1} \cdot (x,u))) | dx du \Bigr] dy dv \, d\tau.
$$ 
Recall that $(y,v)^{-1}\cdot (x,u) = (x - y, u - v + (1/2) \langle Jx, y \rangle)$. In the $(x,u)$ integral above,
make the change of variables $u' = u-v + (1/2) \langle J x, y \rangle$, followed by $x' = x -y$, noting that
$$
|x-y| \sim |\tau|/2^j \ \Leftrightarrow \ |x| \sim |\tau|/2^j \ \ {\rm since} \ \ |y| \le 2^L \ll |\tau|/2^j,
$$
to conclude
\begin{align*}
III_j^1 \ \le \ \int_{|\tau|\gg 1} |{\widehat m}^j(\tau)|   \int_{|x| \sim |\tau|/2^j}
g^Q | K_{\tau}(\delta_g (x,u)) | dx du  \,  d\tau
\\
\le \  \int_{|\tau|\gg 1} |{\widehat m}^j(\tau)|   \int_{|x| \sim1}
| K_{\tau}(x,u) | dx du  \,  d\tau.
\end{align*}
By \eqref{finite-speed}, we have the pointwise estimate $|K_{\tau}(x,u)| \lesssim |\tau|^M (|\tau x| + |\tau^2 u|)^{-N}$
for some $M\ge 1$ and every $N\ge 1$ whenever $\tau$ is large. Therefore
$$
 \int_{|x| \sim1} |K_{\tau}(x,u)| dx du \ \lesssim \ \frac{1}{|\tau|^{N-M}} \int_{{\mathbb R}^{d_2}} \frac{1}{(1+ |\tau u|)^N} du
\ = \ c |\tau|^{-(N+d_2 - M)}
$$
and so
$$
III_j^1 \ \lesssim \ \int_{|\tau|\gg 1} |{\widehat m}^j(\tau)| |\tau|^{-2} \, d\tau \ \lesssim \ 2^{-j\theta d/2}
$$
which is summable for $j>0$ with $j+L\le 0$. 

The completes the proof of Theorem \ref{hardy-iso}.




\begin{thebibliography}{MMM}

\bibitem{C} M. Christ, \emph{$L^p$ bounds for spectral multipliers on nilpotent groups},
Trans. Amer. Math. Soc. \textbf{328} (1991), no. 1, 73-81.

\bibitem{CW} P. Ciatti and J. Wright, \emph{Strongly singular integrals on stratified Lie groups}, To appear in Springer INdAM Series, \emph{ Proceedings of the Conference “Geometric aspects of Harmonic Analysis” (Cortona, 25–29 June 2018)}, {\tt arXiv:1810.07540}.

\bibitem{GMM} S. Nicolussi Golo, A. Martini and D. M\"uller, \emph{Spectral multipliers and wave equation for
sub-Laplacians: lower regularity bounds of Euclidean type}, preprint, {\tt arXiv:1812.02671}.

\bibitem{hebisch}  W. Hebisch, \emph{Multiplier theorem on generalized Heisenberg groups},
Colloquium Mat.  \textbf{65} (1993), no. 2, 231-239.

\bibitem{h} A. Hulanicki, \emph{A functional calculus for Rockland operators on nilpotent Lie groups},
Studia Math. \textbf{78} (1984), no. 3, 253-266.

\bibitem{alessio-fourier} A. Martini. \emph{Analysis of joint spectral multipliers on Lie groups of polynomial
growth},  Ann. Inst. Fourier (Grenoble) \textbf{62} (2012), no. 4, 1215–1263.

\bibitem{alessio-reiter} A. Martini, \emph{Spectral multipliers on Heisenberg-Reiter and related groups}, Ann. Mat.
Pura Appl. (4) \textbf{194} (2015), no. 4, 1135–1155.

\bibitem{free} A. Martini and D. M\"uller, \emph{$L^p$ spectral multipliers on the free group $N_{3,2}$}, Studia Math.
\textbf{217} (2013), no. 1, 41–55.

\bibitem{plms} A. Martini and D. M\"uller, \emph{Spectral multipliers of Euclidean type on new classes
of two-step stratified groups}, Proc. London Math. Soc. \textbf{109} (2014), no. 2, 1229-1263.

\bibitem{gafa} A. Martini and D. M\"uller, \emph{Spectral multipliers on 2-step groups: topological
versus homogeneous dimension}, GAFA \textbf{26} (2016), no. 2, 680-702. 

\bibitem{MM} G. Mauceri and S. Meda, \emph{Vector-valued multipliers on stratified groups},
Rev. Mat. Iberoam. \textbf{6} (1990), no. 3-4, 141-154.

\bibitem{melrose} R. Melrose, \emph{Propagation for the wave group of a positive subelliptic second order 
differential operator}, Hyperbolic equations and related topics, Academic Press, Boston,
Mass., 1986.

\bibitem{MSeeger} D. M\"uller and A. Seeger, \emph{Sharp $L^p$ bounds for the wave equation on groups of Heisenberg type}, Anal. PDE \textbf{8} (2015), no. 5, 1051-1100. 

\bibitem{MS} D. M\"uller and E.M. Stein, \emph{On spectral multipliers for Heisenberg and related groups},
J. Math. Pures Appl. \textbf{73} (1994), no. 4, 413-440.

\bibitem{MS1} D. M\"uller and E.M. Stein, \emph{$L^p$-estimates for the wave equation on the Heisenberg group},
Rev. Mat. Iberoam. \textbf{15} (1999), no. 2, 297-334.

\end{thebibliography}
\end{document}